\documentclass[12pt]{article}
\usepackage[T1]{fontenc}
\usepackage[utf8]{inputenc}
\usepackage[english]{babel}

\usepackage{graphicx}
\usepackage{float}
\usepackage{hyperref}
\usepackage[bottom=2cm,top=2cm,left=2cm,right=2cm]{geometry}
\usepackage{amssymb, amssymb, amsmath, amsfonts, mathrsfs, amsthm}
\newtheorem{definition}{Definition}
\newtheorem{theorem}{Theorem}
\usepackage{xcolor}

\usepackage{fancybox}

\usepackage{mathpazo}

\numberwithin{equation}{section}

\newcommand{\Y}{\mathsf{Y}}

\newcommand{\D}{\mathcal{D}}

\newcommand{\W}{\mathsf{R}}

\newcommand{\Frak}{\mathfrak{F}}

\newcommand{\dint}{\displaystyle\int}
\newcommand{\Uo}{\mathsf{S}}

\theoremstyle{plain}
\newtheorem{thm}{Theorem}[section]

\newtheorem{lem}[thm]{Lemma}

\newtheorem{remark}[thm]{Remark}

\numberwithin{equation}{section}

\renewenvironment{proof}{$\mathbf{Proof}$.}{\begin{flushright}\qedsymbol\end{flushright}}
\title{\textsc{Controllability of nonautonomous measure driven integrodifferential evolution equations with nonlocal conditions}}
\author{ 
	Mamadou Niang $^{a}$ \footnote{E-mail address: niang.mamadou@ugb.edu.sn},
 \:Mamadou Pathé LY $^{a}$ \footnote{E-mail address: ly.mamadou-pathe@ugb.edu.sn},
	\:Abdoul Aziz Ndiaye $^{a}$\footnote{E-mail address: ndiaye.abdoul-aziz5@ugb.edu.sn},\\
	\:Mamadou Abdoul Diop $^{a,b}$ \footnote{E-mail address: mamadou-abdoul.diop@ugb.edu.sn}
}

\begin{document}
	\newcommand\norm[1]{\left\lVert#1\right\rVert}
	\renewenvironment{proof}{$\mathbf{Proof}$.}{\begin{flushright}\qedsymbol\end{flushright}}
	\newenvironment{prob}{$\mathbf{Problem}$.}{\emph{\begin{flushright}\qedsymbol\end{flushright}}}
	\newenvironment{problem}[1]{\par\noindent\underline{Problem:}\space#1}{}
	\providecommand{\keywords}[1]{\textbf{\textit{Keywords:}} #1}
	\providecommand{\classification}[1]{\textbf{\textit{Mathematical Subject Classification:}} #1}
	\renewenvironment{proof}{$\mathbf{Proof}$.}{\begin{flushright}\qedsymbol\end{flushright}}
	\selectlanguage{english}
	\maketitle
	{\footnotesize
		\begin{flushleft}			
			\thanks{$^a$ Universit\'e Gaston Berger de Saint-Louis, \, UFR SAT D\'epartement de Math\'ematiques, B.P234,\,Saint-Louis, S\'en\'egal}\\
			\thanks{$^b$ UMMISCO UMI 209 IRD/UPMC, Bondy, France}
	\end{flushleft}}
	\begin{abstract}
	This research delves into the exact  controllability of semilinear measure-driven integrodifferential systems in nonlocal settings. We give enough controllability requirements using the measure of noncompactness and the Mönch fixed point theorem without making any assumptions about how compact the evolution system is in relation to the linear part of the measure  system. We find results here that both generalize and improve upon many prior findings.
\end{abstract}	
	\keywords{Measure differential equations, Controllability, $c_0$-semigroup, Resolvent operator, Semilinear evolution integrodifferential equations, Measure of noncompactness}\\
	\classification{	
		26A42,
		34A38,
		34K30,
		34K35,
		93B05.}	
\section{Introduction}
Measure differential equations, sometimes called measure-driven equations, help to represent dynamical systems with discontinuous trajectories. They may model results that contradict classical physics rules, such as Zeno trajectories and the effects of quantum mechanics. However, the main distinction between this type of equation and the more common impulsive differential equation is that the former permits unlimited discontinuous points within a finite time interval. For more details, check out \cite{goebel2009hybrid, kronig1931quantum, lygeros2008hybrid}.
 \par According to Byszewski's works \cite{byszewski1991theorems}, nonlocal initial conditions are more effective at addressing physical concerns than the traditional initial condition $u(0)=u_0$. Deng et al. \cite{deng1993exponential} looked at how a small amount of gas moved through a clear tube and used the nonlocal conditions $u(0)=\sum_{k=1}^{n}c_ku(t_k)$ to get more accurate results. More precise measurements at $t_k, k=1,2,\ldots,m$ are possible with the initial condition $u(0)=\sum_{k=1}^{m}c_ku(t_k)$ as opposed to just measuring at $t=0$.
\par However, control theory relies on controllability. Using the allowed controls to go from any starting state to any given ending state makes a dynamic control system controllable. Because of its centrality in control theory, its importance in physics, and its practical implications in engineering, it attracts the attention of many mathematicians and engineers. Various scientific and technical sectors have recently shown a strong interest in controllability (for more reading, see \cite{dieye2017controllability,fu2006controllability,diop2021existence}).
\par Beyond that, it's important to note that most previous work focused on scenarios where the operators had no bearing on the passage of time.  When discussing specific parabolic evolution issues, the operators rely on the time variable $t$. It is common for applications to employ this category of operators \cite{Acq, Amann, Chen, Pchen, Weiss}. Recent studies have focused on improving measure-driven evolution systems. The research conducted by Cao and Sun \cite{caojitao2017controllability} investigated the   exact controllability for the following  evolution system:
\begin{equation}\label{sysref}
\left\{\begin{array}{l c l}
\mathrm{d}x(t) = \Big[\mathcal{A}(t)x(t)  + Bu(t)\Big]\mathrm{d}t + f(t,x(t))\mathrm{d}g(t),\ t\in J,\\ 
x(0) + p(x)=x_0,&\\
\end{array}\right.
\end{equation}
where $J=[0,b]$ with $b>0$. The state variable $x(\cdot)$ takes values in Banach space $X$ with the norm $\|\cdot\|$. $\mathcal{A}(t)$ is a family of linear operators which generates an evolution system $\{\mathcal{U}(t,s):0\le s\le t\le b\}$. 
\par Based on those earlier studies, this study looks into the controllability of  mild solutions  for the following semilinear measure-driven integrodifferential system with nonlocal conditions: \begin{equation}\label{sys}
\left\{\begin{array}{l c l}
\mathrm{d}\zeta(t)= \left[\mathcal{A}(t)\zeta(t) + \displaystyle\int_{0}^{t}\Delta(t,s)\zeta(s)\mathrm{d}s + \mathcal{V}u(t)\right]\mathrm{d}t  + \delta(t,\zeta(t))\mathrm{d}h(t),\ t\in I,\\
\zeta(0) + g(\zeta)=\zeta_0.&\\
\end{array}\right.
\end{equation}
where $I=[0,a], a>0$. The state variable $\zeta(\cdot)$ takes values in a Banach space $\mathbb{X}$ endowed with the norm $\|\cdot\|$. $\mathcal{A}(t)\, : \mathsf{D}(\mathcal{A})\;\subset \mathbb{X}\to\mathbb{X}$ is a family of a closed linear operator with a fixed domain $\mathsf{D}(\mathcal{A})$. $\Delta(t,s)\;:\mathsf{D}(\Delta)\;\subset \mathbb{X}\to\mathbb{X}$ is a closed linear operator with domain $\mathsf{D}(\Delta)\supset\mathsf{D}(\mathcal{A})$ for each pair $(t,s)$ with $0\le s\le t <\infty$. The nonlinear function $\delta: I\times\mathbb{X}\to\mathbb{X}$ and nonlocal function $g:\mathcal{R}(I, \mathbb{X})\to \mathbb{X}$ will be specified later, where $\mathcal{R}(I, \mathbb{X})$ denotes the space of all regulated functions on $I$ in which we will consider the problem.
The control $u(\cdot)$ belongs to $L^2(I, \mathbb{K})$, which is a Banach space of admissible control functions with $\mathbb{K}$ a Banach space. $\mathcal{V}:\mathbb{K}\to\mathbb{X}$ is a bounded linear operator. $h:I\to\mathbb{R}$ is a left non-decreasing discontinuous function, and $\mathrm{d}h$ denotes the distributional derivative of the function $h$.
We must investigate the controllability of the measure-driven system in regulated functions, as it differs from the spaces of continuous and piecewise continuous functions. The impulses in measure-driven equations, which depend on the measure $h$, are intrinsic and possibly note prefixed. We are aware of no previous reports on the measure-driven evolution of integrodifferential systems under nonlocal circumstances \eqref{sys}. This work aims to fill this knowledge gap by addressing the controllability of the aforementioned measure-driven integrodifferential  system  (\ref{sys}).
The solution to autonomous evolution equations is considerably more straightforward than non-autonomous ones, as is well known. We take our inspiration from \cite{diop2022integrodifferential}. The operator families establish relationships between the resolvent operator and the evolution family. Using $\mathcal{A}(t)$ to make sure that the system has a mild solution is possible, and this can be done even if the resolvent operator $\W(t, s)$ for the linear part of the system (\ref{sys}) is not compact. As a result, we prove our findings by utilizing the noncompactness measure and the M\"onch fixed point theorem. Based on the method proposed by Grimmer \cite{Grimmer}, we will assume that the norm continuity of the resolvent operator is equal to the norm continuity of the evolution family. The work that we have done can be seen as building upon and expanding upon the ideas presented in \cite{caojitao2017controllability} and similar works.
 Below are the key points, significance, and originality of this article:
\begin{enumerate}
	\item A mild solution to the system (\ref{sys}) is established for the first time by applying an integral equation given in terms of the resolvent operator. Then, we obtain some sufficient conditions to ensure system controllability by using the theory of evolution family, the measure of noncompactness and M\"onch fixed point theorem .
	\item The significance to study the MDEs is that one can model Zeno trajectories because \textcolor{blue}{$h$}
as a function of bounded variation may exhibit infinitely many discontinuity points in
a finite interval. Such systems arise in game theory, non-smooth mechanics, and other
systems \cite{Lygeros12, leo27}.
	\item As the authors of \cite{Sure10, Gu22} state, MDEs are pretty challenging to deal with because they are less continuous or smooth than ordinary differential equations. Since MDEs are only continuous and confined, future studies will be more challenging.
\item The findings from our derivation expand upon the prior findings of Cao and Sun \cite{caojitao2017controllability}.
\item  Lastly, we provide an application to back up the study's validity.
\end{enumerate}
The following is an outline of our paper's content. We devote the section \ref{sect2} of this paper to introducing and reviewing some fundamental notations, preliminary concepts, and lemmas that will be useful throughout the rest of the work. The solvability and controllability of the nonlinear system will focus our attention in Section \ref{sect3}. Section \ref{sect4} presents an example to illustrate our primary findings.
\section{Preliminaries}\label{sect2}
\subsection{Integrodifferential equations}
Grimmer's resolvent operator plays a central role in solving integrodifferential equations of the type we are studying here \eqref{sys}. To recall the basic properties of this resolvent operator, we'll make a few additional assumptions.
Let $\Y$ be a Banach space formed from $\mathsf{D}(\mathcal{A})$ with graph norm  
\begin{displaymath}
\|y\|_\Y:=\|\mathcal{A}(0)y\|_\mathbb{X} +\|y\|_\mathbb{X}\;\text{ for }\, y\in \Y.
\end{displaymath}
As $\mathcal{A}(t)$ and $\Delta(t,s)$ are closed linear operators, it implies that $\mathcal{A}(t)$ and $\Delta(t,s)$ are in the set of bounded operators from $Y$ to $\mathbb{X}$, $\mathsf{B}(\Y, \mathbb{X}),$ for $0\le t\le a$ and $0\le s\le t\le a$, respectively. We further suppose that $A(t)$ and $\Delta(t,s)$ are continuous on  $0\le t\le a$ and $0\le s\le t\le a$, respectively, into $\mathsf{B}(\Y, \mathbb{X})$. Let's now investigate the subsequent homogeneous linear equation : 
\begin{equation}\label{VE}
\left\lbrace
\begin{array}{rl}
\zeta'(t) &= \mathcal{A}(t)\zeta(t) +\displaystyle\int_0^t \Delta(t, s)\zeta(s)ds\quad for\; t \in[0,a],\\
\zeta(0) &= \zeta_0 \in \mathbb{X}.
\end{array}
\right.
\end{equation} 
\begin{definition}\cite{Grimmer}\label{DefResolvant}. \\
	A family of bounded linear operators on $\mathbb{X}$ with the form $\{\W(t, s), 0 \leq s \leq t\}$ is referred to as a resolvent operator for Eq. \eqref{VE}, if the following properties are true :
	\begin{itemize}
		\item[$(\mathsf{i})$] $\W(t, s)$ is strongly continuous in $s$ and $t$, $\W(s, s) = id_{ \mathbb{X}}$ (the identity map of $ \mathbb{X}$), $0 \leq s \leq t\le a$ and $\|\W(t, s)\| \leq M e^{\gamma(t -s)}$ for some constants $M$ and $\gamma$, 
		\item[$(\mathsf{ii})$] $\W(t, s)\Y \subset \Y$ and $\W(t, s)$ is strongly continuous in $s$ and $t$ on $\Y$, 
		\item[$(\mathsf{iii})$] For every $\zeta \in \Y$, $\W(t, s)\zeta$ is strongly continuously differentiable in $s$ and $t$ and nonempty
		\begin{equation}\label{partial}
		\begin{cases}
		\dfrac{\partial \W}{\partial t} (t, s)\zeta = \mathcal{A}(t)\W(t, s)\zeta + \displaystyle\int_s^t\Delta(t, r)\W(r, s)\zeta\mathrm{d}r, \\
		\dfrac{\partial \W}{\partial s }(t, s)\zeta = -\W(t, s)\mathcal{A}(s)\zeta -\displaystyle\int_s^t \W(t, r)\Delta(r, s)\zeta dr. 
		\end{cases}
		\end{equation}
		with $\dfrac{\partial \W }{\partial t}(t,s)\zeta$ and $\dfrac{\partial \W }{\partial s}(t,s)\zeta$ strongly continuous on $0\leq s\leq t$.
	\end{itemize}
\end{definition}
With the eventual resolvent operator, we will use this latter fact to define a mild solution to \eqref{sys} (see below).
\begin{lem}\cite{Grimmer}\label{Autonome}
	If $\mathcal{A}(t)\equiv \mathcal{A}$ and $\Delta(t,s)\equiv \Delta(t-s)$ and there exists a resolvent operator $\W(t,s)$, then $\W(t,s)=\W(t-s,0)$. 
\end{lem}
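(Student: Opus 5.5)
The plan is a translation-invariance argument combined with uniqueness of the resolvent operator. Fix $\tau\ge 0$ and define the shifted family $\mathsf{S}(t,s):=\W(t-\tau,s-\tau)$ for $\tau\le s\le t$. More directly, the target is to show that for fixed $s$ the two families $t\mapsto \W(t,s)$ and $t\mapsto \W(t-s,0)$ both solve the same well-posed problem. Concretely, for $\zeta\in\Y$ put $v(t)=\W(t-s,0)\zeta$ for $t\ge s$.

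First I check that $v$ satisfies the $t$-equation in \eqref{partial}. By property $(\mathsf{iii})$ of Definition \ref{DefResolvant} applied at base point $0$, and using $\mathcal{A}(t)\equiv\mathcal{A}$,
\begin{equation*}
v'(t)=\mathcal{A}\W(t-s,0)\zeta+\int_0^{t-s}\Delta(t-s-r)\W(r,0)\zeta\,\mathrm{d}r .
\end{equation*}
Substituting $r=\sigma-s$ gives
\begin{equation*}
v'(t)=\mathcal{A}v(t)+\int_s^t\Delta(t-\sigma)v(\sigma)\,\mathrm{d}\sigma ,
\end{equation*}
with $v(s)=\zeta$. So $v$ is a strongly continuously differentiable $\Y$-valued solution of the integrodifferential problem started at time $s$ with kernel $\Delta(t,\sigma)=\Delta(t-\sigma)$. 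The function $w(t)=\W(t,s)\zeta$ solves the same problem by \eqref{partial}.

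Next I show that $v=w$. Set $z=w-v$, so that $z(s)=0$ and $z'(t)=\mathcal{A}z(t)+\int_s^t\Delta(t-\sigma)z(\sigma)\mathrm{d}\sigma$ with $z(t)\in\Y$. To prove $z\equiv0$ I use the $s$-derivative identity in \eqref{partial}. Consider $\phi(\sigma)=\W(t,\sigma)z(\sigma)$ for $s\le\sigma\le t$. Differentiating with the product rule (justified by strong continuity and differentiability on $\Y$ from $(\mathsf{ii})$–$(\mathsf{iii})$) gives
\begin{equation*}
\phi'(\sigma)=-\W(t,\sigma)\mathcal{A}z(\sigma)-\int_\sigma^t\W(t,r)\Delta(r-\sigma)z(\sigma)\mathrm{d}r+\W(t,\sigma)\Big[\mathcal{A}z(\sigma)+\int_s^\sigma\Delta(\sigma-\rho)z(\rho)\mathrm{d}\rho\Big].
\end{equation*}
Integrate over $[s,t]$ and apply Fubini to the double integral; the two double integrals are over the same triangle $s\le\rho\le r\le t$ and cancel. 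This yields $z(t)=\phi(t)-\phi(s)=0$. Since $\Y$ is dense in $\mathbb{X}$ (domain of a generator) and both operators are bounded, $\W(t,s)=\W(t-s,0)$ on all of $\mathbb{X}$.

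The main obstacle is this uniqueness step: rigorously justifying the product-rule differentiation of $\sigma\mapsto\W(t,\sigma)z(\sigma)$ and the Fubini interchange. This needs joint strong continuity of $\partial_s\W$ on $\Y$, uniform boundedness of $\W$ from $(\mathsf{i})$, and continuity of $\Delta$ into $\mathsf{B}(\Y,\mathbb{X})$. I would handle it by writing difference quotients and splitting them into the standard two terms, using the uniform bound $Me^{\gamma(t-s)}$ together with the assumed continuity hypotheses. Density of $\Y$ is the remaining point; if it is not assumed, I would conclude the result on $\Y$ only.
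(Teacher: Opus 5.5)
Your argument is correct and is the standard one behind this result; the paper gives no proof and simply cites Grimmer, whose uniqueness argument runs the same way. You show that $t\mapsto\W(t-s,0)\zeta$ solves the forward problem started at $s$. You then differentiate $\sigma\mapsto\W(t,\sigma)z(\sigma)$ using the backward identity in \eqref{partial}, so the two double integrals cancel on the triangle. Density of $\Y$, which holds in the paper's setting by \textbf{(E1)} or because $\mathcal{A}$ is a generator, extends the identity to $\mathbb{X}$. The regularity points you flag (product rule, continuity of $\phi'$, Fubini) follow exactly as you indicate from the uniform bound on $\W$, strong continuity of $\partial_s\W$ on $\Y$, and continuity of $\Delta$ into $\mathsf{B}(\Y,\mathbb{X})$.
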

\begin{definition}
	Let $(\mathcal{A}(t))_{t\in[0,a]}$ be a family of  infinitesimal generators of $C_0$-semigroups. $(\mathcal{A}(t))_{t\in[0,a]}$ is called stable if  there exist constants $M_0\geq 1$ and $\alpha_0$ such that 	
	\begin{displaymath}
	\left\|\prod_{k=1}^n(\mathcal{A}(s_k)-\lambda id_{\mathbb{X}})^{-1}\right\|_{\mathcal{L}(\mathbb{X})} \leq \dfrac{M_0}{(\lambda -\alpha_0 )^{n}}
	\end{displaymath}	
	for all $\lambda > \alpha_0,$ and for every finite sequence $ 0\leq s_1 \leq s_2 \leq \ldots \leq s_n,\, n=1,2,3,\ldots$
\end{definition}
Let $\Frak=BUC(\mathbb{R}^+;\mathbb{X})$ be the space of all bounded uniformly continuous functions defined from $\mathbb{R}^+$ to $\mathbb{X}$. Let $\Delta(t),t\ge 0$ be defined as the linear operator from $\Y$ into $\mathfrak{F}$ and given by  $(\Delta(t)y)(s)=\Delta(t+s, t)y$ for $s\ge 0$ and $y\in \Y$. Let $\mathsf{D}_s$ be the differentiation operator defined on $\mathfrak{F}$ by $\mathsf{D}_s\ell=\ell^\prime$ on a domain $\mathcal{D}(\mathsf{D}_s)\subset\mathfrak{F}$. Then, $\mathsf{D}_s$ is the infinitesimal generator of the translation semigroup $(T(t))_{t\ge 0}$ defined on $\mathfrak{F}$ by 
$$
(T(t)\ell)(s)=\ell(t+s)\ \text{ for } \ell\in\Frak\, \text{ and } t,s\ge 0.
$$
Due to Grimmer's work \cite{Grimmer}, with the help of the above notations, we make the following assumptions in order to prove the existence of the resolvent operator of the equation \eqref{VE}.\\

\begin{enumerate}
	\item[$\textbf{(C1)}$]\, $(\mathcal{A}(t))_{t \in[0, a]}$ is a stable family of generators such that $\mathcal{A}(t)\zeta$ is strongly continuously differentiable on $\mathbb R^+$ for $\zeta \in\Y$. 
	Additionally, for $\zeta\in \Y$, $\Delta(t)\zeta$ is strongly continuously differentiable on $\mathbb R^+$.
	\item[$\textbf{(C2)}$]\, $\Delta(t)$ is continuous on   $[0,+\infty[$ into $\mathsf{B}(\Y,\Frak)$,  
	\item[$\textbf{(C3)}$]\, $\Delta(t):\Y\to \D(\mathsf{D}_s)$ for all $t\ge 0$ and  
	 $D_s\Delta(t)$ is continuous on $\mathbb{R}^+$ into $\mathsf{B}(\Y, \Frak)$. 
\end{enumerate}  
If all three of the above conditions are met, the condition $(\mathbf{R})$ is said to be satisfied.
\begin{remark}
	Under the conditions $\mathcal{A}(t)\equiv\mathcal{A}$ and $\Delta(t,s)\equiv\Delta(t-s)$, we say that \textcolor{blue}{$\Delta(t)\in \mathsf{B}(\Y, \Frak)$} is constant, which implies that conditions  \textbf{(C1)-(C3)} are satisfied if $\mathcal{A}$ generates a $C_0$-semigroup.
\end{remark}
\begin{lem}\cite{Grimmer}\label{ER} If the statement $(\mathbf{R})$ is true, then Eq. \eqref{VE} admits a unique resolvent operator. 
\end{lem}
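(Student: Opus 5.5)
The plan is Grimmer's semigroup approach: lift \eqref{VE} to a non-autonomous Cauchy problem without memory on the product space $\mathbb{X}\times\Frak$, solve that problem with Kato's theory of stable families, and read $\W(t,s)$ off the first component. The history is encoded in a second variable $\ell(t)\in\Frak$. Set
\[
\ell(t)(\theta)=\int_s^t \Delta(t+\theta,r)\zeta(r)\,\mathrm{d}r ,
\]
so that $\ell(t)(0)$ is exactly the memory term. Formally, $\ell$ then satisfies $\ell'(t)=\mathsf{D}_s\ell(t)+\Delta(t)\zeta(t)$. This leads to the system
\[
\frac{\mathrm{d}}{\mathrm{d}t}\begin{pmatrix}\zeta\\ \ell\end{pmatrix}
=\mathcal{B}(t)\begin{pmatrix}\zeta\\ \ell\end{pmatrix},\qquad
\mathcal{B}(t)=\begin{pmatrix}\mathcal{A}(t) & \delta_0\\ \Delta(t) & \mathsf{D}_s\end{pmatrix},
\]
where $\delta_0\ell=\ell(0)$. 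The domain is $\Y\times\D(\mathsf{D}_s)$, which does not depend on $t$.

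\emph{Step 1: generation and stability.} Split $\mathcal{B}(t)$ as a diagonal part $\mathrm{diag}(\mathcal{A}(t),\mathsf{D}_s)$ plus an off-diagonal perturbation.
\begin{itemize}
\item The diagonal part is a stable family. Its first block is stable by (C1), and its second block is the constant generator of the contractive translation semigroup $(T(t))_{t\ge0}$.
\item The term $\delta_0$ is bounded from $\Frak$ to $\mathbb{X}$.
\item The term $\Delta(t)$ is bounded only from $\Y$ to $\Frak$, so it is an unbounded perturbation on the product space.
\end{itemize}
To handle $\Delta(t)$, I would use the triangular structure, either by a similarity transform or by working on $\Y\times\Frak$. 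The aim is to show that each $\mathcal{B}(t)$ generates a $C_0$-semigroup and that the family $\{\mathcal{B}(t)\}$ is stable with constants uniform on $[0,a]$. The uniformity should follow from (C2), since it makes $\|\Delta(t)\|_{\mathsf{B}(\Y,\Frak)}$ bounded on compacts. Stability under bounded perturbations (Kato/Pazy) then gives stability of the full family.

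\emph{Step 2: Kato's existence theorem.} By (C1)--(C3), $t\mapsto\mathcal{B}(t)x$ is strongly $C^1$ for $x$ in the common domain. Here (C3) is what makes the entry $\Delta(t)$, viewed as a map into $\D(\mathsf{D}_s)$, regular enough. Kato's theorem for hyperbolic equations (Pazy, Thm.~5.4.8) then yields an evolution family $\mathcal{U}(t,s)$ on $\mathbb{X}\times\Frak$. This family leaves the domain invariant and is differentiable there in both $t$ and $s$.

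\emph{Step 3: identification.} Define $\W(t,s)\zeta=\pi_1\,\mathcal{U}(t,s)(\zeta,0)$, with $\pi_1$ the projection onto the first component. Solve the second equation by variation of constants with the translation semigroup. For $\zeta\in\Y$ this gives $\ell(t)(0)=\int_s^t\Delta(t,r)\W(r,s)\zeta\,\mathrm{d}r$. Substituting this into the first component yields the first identity in \eqref{partial}. The bound $Me^{\gamma(t-s)}$, $\W(s,s)=id_{\mathbb{X}}$, and strong continuity on $\mathbb{X}$ and on $\Y$ are inherited from $\mathcal{U}$. The backward identity in \eqref{partial} comes from differentiating $\mathcal{U}(t,s)$ in $s$, namely $\partial_s\mathcal{U}=-\mathcal{U}\mathcal{B}(s)$, and computing the first component.

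\emph{Step 4: uniqueness.} Suppose $\W_1,\W_2$ are two resolvents and fix $\zeta\in\Y$. Differentiate $r\mapsto \W_1(t,r)\W_2(r,s)\zeta$ using the $s$-derivative identity for $\W_1$ and the $t$-derivative identity for $\W_2$. The local terms cancel. The memory terms cancel after integrating over $s\le r\le t$ and applying Fubini. Hence $\W_1(t,s)\zeta=\W_2(t,s)\zeta$, and density of $\Y$ in $\mathbb{X}$ gives $\W_1=\W_2$.

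The main obstacle is Step 1, because $\Delta(t)$ is not bounded on $\mathbb{X}\times\Frak$. Ordinary bounded-perturbation arguments therefore fail, and showing that $\Y\times\D(\mathsf{D}_s)$ is an admissible space with stability preserved needs the full strength of (C2)--(C3).
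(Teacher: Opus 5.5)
Your architecture is the one behind the cited result; the paper gives no proof of its own and only invokes Grimmer. You lift \eqref{VE} to $\mathbb{X}\times\Frak$ with the operator matrix built from $\mathcal{A}(t)$, $\delta_0$, $\Delta(t)$, $\mathsf{D}_s$, apply Kato's theorem for a stable family with constant domain $\Y\times\D(\mathsf{D}_s)$, and read $\W(t,s)$ off the first component. Step~4 is correct: the memory terms cancel by Fubini on the triangle $s\le r\le\sigma\le t$. Your derivation of the forward identity in \eqref{partial} is also correct.

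The gap is Step~1. It is the only genuinely hard point, you leave it as a programme, and the mechanism you indicate would not close it. A bound on $\|\Delta(t)\|_{\mathsf{B}(\Y,\Frak)}$ uniform on $[0,a]$ is all (C2) provides. At best it gives generation constants for the individual $\mathcal{B}(t)$ that are uniform in $t$. That does not imply stability, which concerns products $\prod_k(\lambda-\mathcal{B}(s_k))^{-1}$ taken at different times. Likewise, a bounded-perturbation theorem applies only after $\Delta(t)$ has been removed, and that removal is necessarily $t$-dependent.

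Here is what is needed. For $\lambda_0>\alpha_0$ set $K(t)=\Delta(t)(\lambda_0-\mathcal{A}(t))^{-1}$ and $Q(t)=\begin{pmatrix} I&0\\ K(t)&I\end{pmatrix}$.
\begin{itemize}
\item By (C3), $K(t)\in\mathsf{B}(\mathbb{X},\Frak)$ takes values in $\D(\mathsf{D}_s)$ and $\mathsf{D}_sK(t)$ is bounded. Hence $Q(t)$ maps $\Y\times\D(\mathsf{D}_s)$ onto itself.
\item Using $K(t)\mathcal{A}(t)+\Delta(t)=\lambda_0K(t)$ on $\Y$, one finds $Q(t)\mathcal{B}(t)Q(t)^{-1}=\mathrm{diag}(\mathcal{A}(t),\mathsf{D}_s)+P(t)$. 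Here $P(t)$ is bounded uniformly on $[0,a]$, with entries built from $\delta_0$, $K(t)$ and $\mathsf{D}_sK(t)$. This gives generation and stability for the conjugated family.
\item To return to $\{\mathcal{B}(t)\}$ you need Kato's lemma: conjugation by uniformly bounded isomorphisms of bounded variation in operator norm preserves stability. Write $Q(s_k)Q(s_{k-1})^{-1}=I+E_k$ inside the resolvent product; the stability constant picks up a factor $\exp(c\,\mathrm{Var}\,Q)$.
\item The bounded variation (indeed Lipschitz continuity) of $t\mapsto K(t)$ comes from the strong $C^1$ hypotheses on $\mathcal{A}(t)\zeta$ and $\Delta(t)\zeta$ in (C1), not from (C2). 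It follows via the uniform boundedness principle and $R_t-R_s=R_t(\mathcal{A}(t)-\mathcal{A}(s))R_s$, where $R_t=(\lambda_0-\mathcal{A}(t))^{-1}$.
\end{itemize}
So (C3) is used here, not in Step~2. There, the strong $C^1$ property of $t\mapsto\mathcal{B}(t)z$ required by Kato's theorem follows from (C1) alone.

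There is also a smaller gap in Step~3. Differentiating $\mathcal{U}(t,s)(\zeta,0)$ in $s$ produces the term $\pi_1\mathcal{U}(t,s)(0,\Delta(s)\zeta)$. Identifying it with $\int_s^t\W(t,r)\Delta(r,s)\zeta\,\mathrm{d}r$ is not just ``computing the first component''. For $f\in\D(\mathsf{D}_s)$, the function $z(t)=\mathcal{U}(t,s)(0,f)-(0,T(t-s)f)$ satisfies $z'=\mathcal{B}(t)z+(f(t-s),0)$ with $z(s)=0$. Hence $z(t)=\int_s^t\mathcal{U}(t,r)(f(r-s),0)\,\mathrm{d}r$. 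Taking $f=\Delta(s)\zeta$, which is admissible by (C3), gives the backward identity.
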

Let's consider the following evolution integrodifferential equation:
\begin{equation}\label{nhomoeq}
\left\lbrace
\begin{array}{rl}
\zeta^\prime(t) &= \mathcal{A}(t)\zeta(t) +\displaystyle\int_0^t \Delta(t, s)\zeta(s)ds + h(t)\quad for\; t\ge 0,\\
\zeta(0) &= \zeta_0 \in \mathbb{X},
\end{array}
\right.
\end{equation} 
where $h\in \mathbb{L}_{loc}(\mathbb{R}^+, \mathbb{X})$.
\begin{definition}\cite{Grimmer} A function $\zeta : [0, +\infty[\to\mathbb{X}$ is said to be a strict solution of \eqref{nhomoeq}, if 
	\begin{enumerate}
		\item[(i)] $\zeta\in\mathcal{L}([0, +\infty);\mathbb{R})\cap\mathcal{L}([0, +\infty); \mathsf{D}(\mathcal{A}))$,
		\item[(ii)] $\zeta$ satisfies the following equation:
		\begin{align}\label{strict}
		\zeta(t)=\W(t,0)\zeta_0+\int_{0}^{t}{\W(t,s)h(s)ds},\, \quad t\in\mathbb{R}^+. 
		\end{align}
		where $\{\W(t,s): 0\le s\le t\le a\}$ is resolvent operator of equation \eqref{VE}. 
	\end{enumerate} 
\end{definition}
The following theorem ensures the existence of the strict solution to equation \eqref{nhomoeq}.
\begin{theorem}\cite{Grimmer}
	Assume that condition $(\mathbf{R})$ is satisfied. If $\zeta_0\in\mathsf{D}(\mathcal{A})$ and $h\in\mathcal{C}^1(\mathbb{R}^+, \mathbb{X})$, then equation \eqref{VE} has a unique strict solution given by \eqref{strict}.
\end{theorem}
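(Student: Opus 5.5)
Under $(\mathbf{R})$, Lemma \ref{ER} gives a unique resolvent operator $\W(t,s)$ for \eqref{VE}. We take $\zeta$ defined by \eqref{strict}, show it is differentiable with values in $\mathsf{D}(\mathcal{A})$ and solves \eqref{nhomoeq}, and then prove uniqueness. (The statement refers to \eqref{VE}, but with forcing term $h$ the relevant problem is \eqref{nhomoeq}; for $h=0$ the result reduces to \eqref{VE}.)

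\textbf{Step 1: the homogeneous part.} For $\zeta_0\in\mathsf{D}(\mathcal{A})=\Y$, property $(\mathsf{iii})$ of Definition \ref{DefResolvant} says that $t\mapsto\W(t,0)\zeta_0$ is continuously differentiable. Its derivative is $\mathcal{A}(t)\W(t,0)\zeta_0+\int_0^t\Delta(t,r)\W(r,0)\zeta_0\,dr$. Property $(\mathsf{ii})$ gives $\W(t,0)\zeta_0\in\Y$, continuous in $\Y$. Hence $\W(\cdot,0)\zeta_0$ is a strict solution of \eqref{VE}.

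\textbf{Step 2: the convolution term.} We must make sense of $\mathcal{A}(t)\int_0^t\W(t,s)h(s)\,ds$ when $h$ merely takes values in $\mathbb{X}$. Since $h\in\mathcal{C}^1$, we write
\[
h(s)=h(0)+\int_0^s h'(\sigma)\,d\sigma
\]
and apply Fubini:
\[
\int_0^t\W(t,s)h(s)\,ds=\int_0^t\W(t,s)h(0)\,ds+\int_0^t\Big(\int_\sigma^t\W(t,s)\,ds\Big)h'(\sigma)\,d\sigma .
\]
The key sub-claim is that for $x\in\mathbb{X}$, the element $\int_\sigma^t\W(t,s)x\,ds$ lies in $\Y$, with a formula for $\mathcal{A}(t)$ of it.

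\begin{itemize}
\item To get this, we first take $x\in\Y$. Integrating the second identity of \eqref{partial} in $s$ over $[\sigma,t]$ gives
\[
x-\W(t,\sigma)x=\int_\sigma^t\W(t,s)\mathcal{A}(s)x\,ds+\int_\sigma^t\int_s^t\W(t,r)\Delta(r,s)x\,dr\,ds .
\]
\item Combining this with the first identity of \eqref{partial} should express $\mathcal{A}(t)\int_\sigma^t\W(t,s)x\,ds$ through bounded expressions in $x$.
\item We then extend to all $x\in\mathbb{X}$ by density of $\Y$ and closedness of $\mathcal{A}(t)$.
\end{itemize}

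If this direct route is awkward, we fall back on Grimmer's construction. There $\W$ comes from the evolution family generated by the operator matrix on $\mathbb{X}\times\Frak$, built from $\mathcal{A}(t)$, $\Delta(t)$ and $\mathsf{D}_s$. For that evolution family, the classical Kato/Pazy theorem for stable families with $C^1$ forcing gives exactly this regularity, and we project back onto the first component.

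Once the sub-claim holds, we differentiate $v(t)=\int_0^t\W(t,s)h(s)\,ds$ by the Leibniz rule. This gives
\[
v'(t)=h(t)+\int_0^t\partial_t\W(t,s)h(s)\,ds .
\]
Using \eqref{partial} and Fubini to swap the $r$- and $s$-integrals, we obtain
\[
v'(t)=\mathcal{A}(t)v(t)+\int_0^t\Delta(t,r)v(r)\,dr+h(t).
\]
Adding Step 1 shows that $\zeta$ defined by \eqref{strict} is a strict solution.

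\textbf{Step 3: uniqueness.} Let $w$ be the difference of two strict solutions. Then $w$ is a strict solution with $h=0$ and $w(0)=0$. Fix $t$ and differentiate $s\mapsto\W(t,s)w(s)$, using the second identity of \eqref{partial} and the equation for $w$. The $\mathcal{A}$ terms cancel directly. The $\Delta$ terms cancel after Fubini, since $\int_0^t\int_0^s\W(t,s)\Delta(s,r)w(r)\,dr\,ds=\int_0^t\int_r^t\W(t,s)\Delta(s,r)w(r)\,ds\,dr$. So the derivative vanishes, and $w(t)=\W(t,t)w(t)=\W(t,0)w(0)=0$.

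\textbf{Main obstacle.} The main obstacle is Step 2: showing that the convolution takes values in $\mathsf{D}(\mathcal{A})$ and interchanging $\mathcal{A}(t)$ with the integrals. This is where the $\mathcal{C}^1$ hypothesis on $h$, the stability assumption (C1), and (C3) enter, and it is where we would lean on Grimmer's product-space reduction.
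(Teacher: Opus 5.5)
The gap is in Step 2, which carries the entire content of the theorem. Your direct route fails as described. Integrating the second identity of \eqref{partial} over $[\sigma,t]$ gives (note the sign) $x-\W(t,\sigma)x=-\int_\sigma^t\W(t,s)\mathcal{A}(s)x\,ds-\int_\sigma^t\int_s^t\W(t,r)\Delta(r,s)x\,dr\,ds$. Here $\mathcal{A}(s)$ acts \emph{before} $\W(t,s)$, and nothing in \eqref{partial} turns $\W(t,s)\mathcal{A}(s)$ into $\mathcal{A}(t)\W(t,s)$; they need not agree even in the autonomous case once $\Delta\neq0$.

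The first identity does not help either. In the nonautonomous case $\partial_t\W(t,s)\neq-\partial_s\W(t,s)$, so $\int_\sigma^t\partial_t\W(t,s)x\,ds=\frac{d}{dt}\int_\sigma^t\W(t,s)x\,ds-x$ is not a boundary term. It involves exactly the quantity you cannot yet control. Moreover, every memory term contains $\Delta(r,s)\in\mathsf{B}(\Y,\mathbb{X})$. So these identities never yield an expression for $\mathcal{A}(t)\int_\sigma^t\W(t,s)x\,ds$ bounded in $\|x\|_{\mathbb{X}}$, and the density/closedness extension has nothing to extend.

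Your Leibniz formula $v'(t)=h(t)+\int_0^t\partial_t\W(t,s)h(s)\,ds$ is also meaningless as written. By $(\mathsf{iii})$, $\partial_t\W(t,s)$ is defined only on $\Y$, while $h(s)\in\mathbb{X}$. Even granting your sub-claim, you would need the uniform bound $\|\int_\sigma^t\W(t,s)x\,ds\|_{\Y}\le C\|x\|$ and would have to differentiate the Fubini-decomposed expression instead. In short, the $\mathcal{C}^1$ regularity of $h$ is never used in a way that closes the argument. A genuinely direct proof would need Kato's integration by parts against $(\mu-\mathcal{A}(s))^{-1}h(s)$. With the memory term, that regenerates a convolution of the same type, which must then be absorbed by a Volterra iteration; none of this is in your sketch.

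What you call the fallback is the product-space argument underlying Grimmer's construction, and it should replace Step 2 altogether. On $Z=\mathbb{X}\times\Frak$ take $\mathbf{A}(t)=\begin{pmatrix}\mathcal{A}(t)&E_0\\ \Delta(t)&\mathsf{D}_s\end{pmatrix}$, with $E_0\ell=\ell(0)$ and constant domain $\Y\times\mathcal{D}(\mathsf{D}_s)$. Under $(\mathbf{R})$ this is a stable family, strongly $\mathcal{C}^1$ on that domain; this is the construction behind Lemma \ref{ER}. The resolvent is $\W(t,s)x=\pi_1\mathcal{U}(t,s)(x,0)$, where $\mathcal{U}$ is the evolution family of $\mathbf{A}(t)$.

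It is simplest to put $h$ into the forcing rather than into the second component of the datum, which would require $h\in\mathcal{D}(\mathsf{D}_s)$. Kato's theorem with datum $(\zeta_0,0)$ and forcing $(h(t),0)\in\mathcal{C}^1([0,T],Z)$ gives a classical solution $(\zeta,y)$. The variation of constants formula for the translation semigroup gives $y(t)(0)=\int_0^t\Delta(t,r)\zeta(r)\,dr$, so $\zeta$ is a strict solution of \eqref{nhomoeq}. The first component of the variation of constants formula for $\mathcal{U}$ is exactly \eqref{strict}.

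Alternatively, once some strict solution exists, your Step 3 computation applied to $s\mapsto\W(t,s)\zeta(s)$, now with the extra term $\W(t,s)h(s)$, shows that every strict solution satisfies \eqref{strict}. That gives the formula and uniqueness at once. In Step 3 itself, note that $\frac{d}{ds}\W(t,s)w(s)=\int_0^s\W(t,s)\Delta(s,r)w(r)\,dr-\int_s^t\W(t,r)\Delta(r,s)w(s)\,dr$ does not vanish pointwise. Only its integral over $[0,t]$ vanishes, by the Fubini identity you wrote, and that is all you need.
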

\begin{definition}
    A function $\zeta: [0, +\infty[\to\mathbb{X}$ satisfying \eqref{strict} is called a mild solution of equation \eqref{nhomoeq}
\end{definition}
According to definition  \ref{DefResolvant} and the uniform boundedness principle, $$1 \leq M_a:=\displaystyle\sup_{0\leq s\leq t\leq a}\|\W(t,s)\|<\infty.$$   
\subsection{Norm-continuity of the resolvent operators}
In the follow-up, we create an adequate requirement to obtain the norm continuity of the resolvent operator for the system \eqref{VE}. Consider the subsequent Cauchy problem:
\begin{equation}\label{equ2}
\left\lbrace
\begin{array}{lcl}
\zeta^\prime(t) &=& \mathcal{A}(t)\zeta(t),\qquad 0\leq s\leq t\leq a,\\
\zeta(s) &=& l\in\mathbb{X}.
\end{array}
\right. 
\end{equation}
Assume the following conditions on the operator $(\mathcal{A}(t))_{t\in [0, a]}$, taken from \cite[chapter 5, Theorem 6.1]{pazy2012semigroups} 
(and also in \cite{Nagel}).
\begin{enumerate}
	\item[$\textbf{(E1)}$]\, $\mathcal{A}(t)$ is closed and the domain $\mathsf{D}(\mathcal{A}(t))=\mathsf{D}(\mathcal{A})$ is independent of $t$ and is dense in $\mathbb{X}$.
	\item[$\textbf{(E2)}$]\, For each $t\ge 0$, the resolvent $\W(\lambda, \mathcal{A})=(\lambda I + \mathcal{A}(t))^{-1}$ exists for all $\lambda\in\mathbb{C}$ with $\mathsf{R}e\lambda\le 0$, and there exists $M_1>0$ such that $$\|\W(\lambda, \mathcal{A}(t))\|_{\mathcal{L}(\mathbb{X})}\le\frac{M_1}{(|\lambda| + 1)}.$$
	\item[$\textbf{(E3)}$]\, There exists $0<\theta\le 1$ and $M_2>0$ such that 
	$$\|(\mathcal{A}(t)-\mathcal{A}(s))\mathcal{A}(r)^{-1}\|\le M_2|t - s|^\theta\, \text{ for all }\, t,s,r\in [0, a].$$ 
\end{enumerate}  
\begin{lem}\cite[chapter 5, Theorem 6.1]{pazy2012semigroups}
	Assume that \textbf{(E1)-(E3)} holds. Then, there exists a unique evolution system $\{\Uo(t,s):0\leq s\leq t\le a\}$ generated by the family $\{\mathcal{A}(t): t\ge 0\}$.
\end{lem}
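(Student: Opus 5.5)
The plan is the classical Tanabe--Sobolevskii construction for parabolic evolution equations, as in Pazy, Chapter 5, Section 6. Hypothesis \textbf{(E2)} says each $\mathcal{A}(t)$ is sectorial, uniformly in $t$. Hence $-\mathcal{A}(t)$ generates an analytic semigroup $e^{-\tau\mathcal{A}(t)}$, and there are bounds $\|e^{-\tau\mathcal{A}(t)}\|\le C$ and $\|\mathcal{A}(t)e^{-\tau\mathcal{A}(t)}\|\le C/\tau$, uniform in $t\in[0,a]$ and $\tau\in(0,a]$. These follow from the Dunford integral representation over a sector contour. I would freeze the coefficient and look for the evolution system in the form
\begin{displaymath}
\Uo(t,s)=e^{-(t-s)\mathcal{A}(s)}+\int_s^t e^{-(t-r)\mathcal{A}(r)}\Phi(r,s)\,\mathrm{d}r .
\end{displaymath}
Here $\Phi$ is an unknown kernel, which I will fix by asking that $\partial_t\Uo(t,s)+\mathcal{A}(t)\Uo(t,s)=0$ for $t>s$.

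Differentiating formally shows that $\Phi$ must solve the Volterra equation $\Phi(t,s)=\Phi_1(t,s)+\int_s^t\Phi_1(t,r)\Phi(r,s)\,\mathrm{d}r$, with
\begin{displaymath}
\Phi_1(t,s)=-(\mathcal{A}(t)-\mathcal{A}(s))e^{-(t-s)\mathcal{A}(s)} .
\end{displaymath}
Writing $\Phi_1(t,s)=-\bigl[(\mathcal{A}(t)-\mathcal{A}(s))\mathcal{A}(s)^{-1}\bigr]\mathcal{A}(s)e^{-(t-s)\mathcal{A}(s)}$ and using \textbf{(E3)} gives $\|\Phi_1(t,s)\|\le CM_2(t-s)^{\theta-1}$. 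This is a weakly singular, integrable kernel. The Neumann series $\Phi=\sum_k\Phi_k$, where $\Phi_{k+1}(t,s)=\int_s^t\Phi_1(t,r)\Phi_k(r,s)\,\mathrm{d}r$, then converges. The usual Beta-function bound gives $\|\Phi_k(t,s)\|\le (C\Gamma(\theta))^k(t-s)^{k\theta-1}/\Gamma(k\theta)$, so $\|\Phi(t,s)\|\le C(t-s)^{\theta-1}$. As a result, $\Uo(t,s)$ is well defined, bounded and strongly continuous, and $\Uo(s,s)=I$.

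Next I have to justify the differentiation, which is the main obstacle. Since $\mathcal{A}(t)e^{-(t-r)\mathcal{A}(r)}$ behaves like $1/(t-r)$, showing that $\int_s^t e^{-(t-r)\mathcal{A}(r)}\Phi(r,s)\,\mathrm{d}r$ lies in $\mathsf{D}(\mathcal{A})$ and can be differentiated in $t$ requires Hölder continuity of $\Phi(\cdot,s)$. Concretely, I would prove
\begin{displaymath}
\|\Phi(t,s)-\Phi(\tau,s)\|\le C(t-\tau)^{\theta}(\tau-s)^{-1}+C(t-\tau)^{\theta}(\tau-s)^{\theta-1}
\end{displaymath}
from \textbf{(E3)} and the analytic semigroup estimates. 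I would then split $\Phi(r,s)=[\Phi(r,s)-\Phi(t,s)]+\Phi(t,s)$ inside the integral. The first part is handled by the Hölder bound. The second part uses the identity $\mathcal{A}(t)\int_s^t e^{-(t-r)\mathcal{A}(r)}\mathrm{d}r$, which reduces to $I-e^{-(t-s)\mathcal{A}(t)}$ plus a term controlled by \textbf{(E3)}. Together these show $\Uo(t,s)\mathbb{X}\subset\mathsf{D}(\mathcal{A})$ for $t>s$, and that $\partial_t\Uo+\mathcal{A}(t)\Uo=0$.

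For the evolution property $\Uo(t,r)\Uo(r,s)=\Uo(t,s)$ and for uniqueness, I would use a single argument. Let $V(t,s)$ be any evolution family solving the problem, and differentiate $\sigma\mapsto V(t,\sigma)\Uo(\sigma,s)x$ for $s<\sigma<t$. Since $\mathsf{D}(\mathcal{A})$ is independent of $t$ by \textbf{(E1)}, and $\mathcal{A}(\sigma)^{-1}$ is bounded, the derivative is $-V\mathcal{A}\Uo+V\mathcal{A}\Uo=0$. Integrating from $s$ to $t$ then gives $V(t,s)=\Uo(t,s)$. Applying the same computation with $V(t,\cdot)=\Uo(t,\cdot)$ and integrating from $r$ to $t$ yields the semigroup-type identity. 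This relies on the backward equation $\partial_s\Uo(t,s)=\Uo(t,s)\mathcal{A}(s)$ on $\mathsf{D}(\mathcal{A})$. That equation follows from a symmetric construction, or can be avoided by working with the unique classical solutions of \eqref{equ2}.
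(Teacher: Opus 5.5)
Your plan is the Tanabe--Sobolevskii parametrix proof of Pazy, Chapter~5, Theorem~6.1, which is exactly what the paper relies on, since the paper states this lemma by citation and gives no argument of its own; it is sound in outline, except that the backward equation in $s$ cannot be bypassed via classical solutions as you suggest, because item (iv) of the paper's definition of an evolution family requires it, so it must actually be derived (for instance by your mirror construction). Note also that the paper's \textbf{(E1)--(E3)} are Pazy's hypotheses for the family $-\mathcal{A}(t)$, not $\mathcal{A}(t)$: its \textbf{(E2)} inverts $\lambda I+\mathcal{A}(t)$ for $\mathrm{Re}\,\lambda\le 0$ and its evolution family satisfies $\partial_t\Uo(t,s)z=\mathcal{A}(t)\Uo(t,s)z$, so here $\mathcal{A}(t)$ itself generates the analytic semigroups, and your construction should be run with $-\mathcal{A}(t)$ in place of $\mathcal{A}(t)$ (i.e.\ with $e^{\tau\mathcal{A}(t)}$ and $\partial_t\Uo=\mathcal{A}(t)\Uo$).
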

When discussing the existence of mild solutions for non-autonomous systems, the theory of evolution family becomes indispensable.

The following is the definition of the evolution system. 
\begin{definition}
	A family $\{\Uo(t,s):0\leq s\leq t\}$ of linear, bounded operators on a Banach space $\mathbb{X}$ is called an evolution family for \eqref{equ2} if
	\begin{description}
		\item[(i)] $\Uo(t,t)=I$ and $\Uo(t,s)=\Uo(t,r)\Uo(r,s)$ for every $0\leq s\leq r\le t\le a$,
		\item[(ii)] The mapping $\{(t,s)\in\mathbb{R}^2: s\leq t \}\ni(t,s)\mapsto\Uo(t,s)$ is strongly continuous, 
		\item[(iii)] $\|\Uo(t,s)\|\leq \mathsf{N}\exp(\omega(t-s))$ for some $\mathsf{N}\ge 1,\omega\in\mathbb{R}$ and all $t\ge s\in\mathbb{R}$,
		\item[(iv)] For each $z\in \Y$, $\Uo(t,s)$ is strongly continuously differentiable in $t$ and $s$ with 
		\begin{equation*}
		\begin{array}{lll}
		(\partial/\partial \,t)\, \Uo(t,s)z&=& \mathcal{A}(t)\Uo(t,s)z,\\\\
		(\partial/\partial \,s )\,\Uo(t,s)z&=& -\Uo(t,s)\mathcal{A}(t)z.
		\end{array}  
		\end{equation*}
		
	\end{description}
\end{definition}

\begin{lem}\cite{Nagel}
	Consider the family of linear operators $(\mathcal{A}(t))_{t\in [0,\infty)}$ on a Banach space $\mathbb{X}$ and the corresponding non-autonomous Cauchy problem \eqref{equ2}. Then, the following assertions are equivalent. 
	\begin{description}
		\item[(i)] Eq. \eqref{equ2} is well posed (with exponentially bounded solutions). 
		\item[(ii)] There is a unique strongly continuous (exponentially bounded) evolution family $\{\Uo(t,s):0\leq s\leq t\le a \}$ on $\mathbb{X}$ solving Eq. \eqref{equ2}. 
	\end{description}
\end{lem}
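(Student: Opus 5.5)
The equivalence is the non-autonomous analogue of the semigroup/abstract-Cauchy-problem correspondence. I will prove the two implications separately, working with the standard notion of well-posedness: for every $s\ge 0$ there is a dense subspace $Y_s\subset\mathsf{D}(\mathcal{A})$ such that for each $l\in Y_s$ the problem \eqref{equ2} has a unique classical solution $\zeta(\cdot;s,l)$ with values in $Y_t$. Moreover, for $s_n\to s$ and $l_n\to l$ with $l_n\in Y_{s_n}$, the extended solutions $\hat\zeta(\cdot;s_n,l_n)$ (set equal to $l_n$ for $t<s_n$) converge to $\hat\zeta(\cdot;s,l)$ uniformly on compacts. Exponential boundedness means $\|\zeta(t;s,l)\|\le \mathsf{N}e^{\omega(t-s)}\|l\|$.

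\textbf{(ii)$\Rightarrow$(i).} Take $Y_s:=\{l\in\mathsf{D}(\mathcal{A}) : \Uo(\cdot,s)l \text{ is a classical solution}\}$. Property (iv) of the evolution family shows that $\Uo(\cdot,s)l$ solves \eqref{equ2} for $l\in\Y$, so $Y_s\supset \mathsf{D}(\mathcal{A})$, which is dense by (E1).

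Uniqueness of classical solutions comes from the usual trick. If $\zeta$ is a classical solution, differentiate $r\mapsto \Uo(t,r)\zeta(r)$ on $[s,t]$ using the $s$-derivative formula in (iv). The result is zero, hence $\zeta(t)=\Uo(t,s)\zeta(s)$. Here care is needed, since (iv) only gives differentiability on $\Y$, and that is exactly where $\zeta(r)$ lives.

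Continuous dependence follows from two facts: strong continuity (ii) of $(t,s)\mapsto\Uo(t,s)$, and the uniform bound (iii), which lets me pass from $l_n\to l$ to uniform convergence on compacts via an $\varepsilon/3$ argument.

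\textbf{(i)$\Rightarrow$(ii).} Define $\Uo(t,s)l:=\zeta(t;s,l)$ for $l\in Y_s$. The exponential bound extends this to a bounded operator on $\mathbb{X}$ by density, with $\|\Uo(t,s)\|\le \mathsf{N}e^{\omega(t-s)}$.

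The algebraic properties come from uniqueness. $\Uo(t,t)=I$ is immediate. For $l\in Y_s$ we have $\zeta(r;s,l)\in Y_r$, and $t\mapsto\zeta(t;s,l)$ restricted to $[r,\infty)$ is a classical solution starting from $\zeta(r;s,l)$. Uniqueness then gives $\Uo(t,s)l=\Uo(t,r)\Uo(r,s)l$, and density plus boundedness extends this to all of $\mathbb{X}$.

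Strong continuity of $(t,s)\mapsto\Uo(t,s)$ on $\{s\le t\}$ is the continuous-dependence clause of well-posedness on the dense sets, upgraded to all of $\mathbb{X}$ by the local uniform bound. The evolution family is unique because any strongly continuous family solving \eqref{equ2} must coincide with $\zeta(t;s,l)$ on the dense sets $Y_s$.

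\textbf{Main obstacle.} I expect the hardest step to be joint strong continuity in $(t,s)$ in (i)$\Rightarrow$(ii), especially continuity in $s$. The initial data $l\in Y_s$ vary with $s$, so one cannot fix a single $l$. I would first take $l\in Y_s$ and approximate it by $l_n\in Y_{s_n}$ using density, then invoke the sequential continuous-dependence hypothesis. This is precisely why well-posedness must include that clause, as in Engel--Nagel. The differentiability statements in (iv) hold only on the regularity subspaces, and I would state them in that form.
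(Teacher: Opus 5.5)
There is a genuine gap. Your two implications are proved for two different notions of ``evolution family solving \eqref{equ2}'', so the equivalence is not closed. (The paper gives no proof of its own; it only cites the statement.)

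\textbf{The mismatch.} In (ii)$\Rightarrow$(i) you get uniqueness of classical solutions from the backward equation $\partial_s\Uo(t,s)z=-\Uo(t,s)\mathcal{A}(s)z$ for \emph{every} $z\in\mathsf{D}(\mathcal{A})$. That is item (iv) of the paper's definition, read with $\mathcal{A}(s)$. With the printed $\mathcal{A}(t)$, the derivative of $r\mapsto\Uo(t,r)\zeta(r)$ would be $\Uo(t,r)(\mathcal{A}(r)-\mathcal{A}(t))\zeta(r)\neq 0$.

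In (i)$\Rightarrow$(ii) you cannot produce this property, as you concede. Well-posedness only controls solutions issuing from the dense sets $Y_s$. Even for $z\in Y_s$, the left quotient $h^{-1}(\Uo(t,s)z-\Uo(t,s-h)z)=\Uo(t,s)\,h^{-1}(z-\Uo(s,s-h)z)$ involves the solution starting from $z$ at time $s-h$. Condition (i) says nothing about that solution unless $z\in Y_{s-h}$. So what you have shown is ``(ii) with (iv) $\Rightarrow$ (i) $\Rightarrow$ (ii) without (iv)''.

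\textbf{How to repair it.} Use the weak notion throughout: cocycle law, strong continuity, exponential bound, $\Uo(t,s)Y_s\subset Y_t$, and $t\mapsto\Uo(t,s)x$ a classical solution for $x\in Y_s$. Then replace your two-sided trick by a one-sided one. Let $v$ be a classical solution with $v(s)=l$ and $v(r)\in Y_r$. Then $w(r)=\Uo(t,r)v(r)$ is continuous on $[s,t]$ and
\[
\lim_{h\downarrow 0}\frac{w(r+h)-w(r)}{h}=\lim_{h\downarrow 0}\Uo(t,r+h)\,\frac{v(r+h)-\Uo(r+h,r)v(r)}{h}=0,
\]
because $v$ and $\Uo(\cdot,r)v(r)$ both have right derivative $\mathcal{A}(r)v(r)$ at $r$. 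A continuous function with vanishing right derivative is constant, so $v(t)=\Uo(t,s)l$. This argument needs $v(r)\in Y_r$, so uniqueness in (i) must be understood among $Y_t$-valued solutions.

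\textbf{Uniqueness of the family.} In that weak setting your uniqueness argument in (i)$\Rightarrow$(ii) also fails. A competing family solves \eqref{equ2} only on its own dense subspaces $Y'_s$. Nothing forces it to agree with $\zeta(\cdot;s,l)$ for $l\in Y_s$, and (i) gives no uniqueness for data in $Y'_s\setminus Y_s$.

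This is a real failure, not a technicality. On $\mathbb{X}=L^p(0,1)$, $1\le p<\infty$, let $\mathcal{A}(t)\equiv\frac{d}{dx}$ with domain $W^{1,p}(0,1)$. Its restriction $B$ (condition $f(1)=0$) generates the nilpotent left shift $T_B$, and its restriction $C$ (condition $f(0)=f(1)$) generates the periodic left shift $T_C$. With $Y_s=\mathsf{D}(B)$ for all $s$, condition (i) holds in the sense above, since a $\mathsf{D}(B)$-valued solution solves $u'=Bu$. Yet $T_B(t-s)$ and $T_C(t-s)$ are two different contractive evolution families solving \eqref{equ2} on dense subspaces.

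So uniqueness in (ii) must either be stated relative to the fixed family $(Y_s)$, in which case your ``coincide on $Y_s$'' argument is correct, or rest on uniqueness of classical solutions for all data in $\mathsf{D}(\mathcal{A})$. The strong notion (iv) supplies the latter, but (i) does not.

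\textbf{What is fine, and one further point.} The extension by density, the cocycle law from uniqueness, and your $\varepsilon/3$ arguments are fine, including your handling of continuity in $s$ via $l_n\in Y_{s_n}$. If you also want the version without the parenthetical exponential bound, you must extract local boundedness of $l\mapsto\zeta(t;s,l)$ from the continuous-dependence clause, for instance by contradiction with $s_n\to s$ and $l_n\to 0$.
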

As $(\mathcal{A}(t))_{t\in[0,\infty)}$ is a stable family of generators and $\mathcal{A}(t)\zeta$ is strongly continuously differentiable on $[0,+\infty)$, so Eq. \eqref{equ2} has a strict solution \cite[p.120]{tanabe}.\\ \\
The following section takes into account the perturbation of equation \eqref{equ2} : 
\begin{equation}\label{equ3}
\left\lbrace
\begin{array}{lcl}
\zeta^\prime(t)&=& \mathcal{A}(t)\zeta(t)+\dint_s^t\Delta(t,u)\zeta(u)du,\qquad 0\leq s\leq t\\
\zeta(s)&=&l\in\mathbb{X}.
\end{array}
\right. 
\end{equation}
Combining the resolvent operator of \eqref{equ3} with the variation of constants formula linked to \eqref{equ2} provides a clear relationship between the resolvent operator and the evolution family. See Diop et al. \cite{diop2022integrodifferential} for more details. 
\begin{definition}
	The resolvent operator $\{\W(t,s):0\leq s\leq t\le a\}$ is said to be norm-continuous if the function $(t,s)\longmapsto \W(t,s)$ is continuous by operator norm for $0\leq s<t$. 
\end{definition}


\begin{theorem}\cite[Theorem 2.9]{diop2022integrodifferential}\label{NCR}
	Assume that \textbf{(R)} and \textbf{(E1)-(E3)} are valid. Let $\left\{\W\left(t,s\right):\ 0\leq s\leq t\le a\right\}$ be the unique resolvent operator of equation \eqref{VE} and $\Uo\left(t,s\right)$ the unique evolution system generated by the family $A(t),\, t\ge  0$. Then, the resolvent operator
	$\W\left(t,s\right)$ for equation \eqref{equ3} is norm-continuous for $T>t-s>0$  if the evolution family $\Uo\left(t,s\right)$ is norm-continuous for $T>t-s>0.$
\end{theorem}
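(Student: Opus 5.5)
The idea is to write the resolvent $\W(t,s)$ as a perturbation of the evolution family $\Uo(t,s)$ by a Volterra-type integral term, and then show that this integral term is norm-continuous whenever $\Uo$ is. Under \textbf{(R)} and \textbf{(E1)-(E3)} both families exist, by Lemma \ref{ER} and Pazy's theorem. Fix $\zeta\in\Y$ and $0\le s\le t\le a$. I will differentiate the map $r\mapsto \Uo(t,r)\W(r,s)\zeta$ on $[s,t]$, using property (iv) of the evolution family and the first identity in \eqref{partial}. The $\mathcal{A}(r)$ terms cancel, and integrating from $s$ to $t$ gives the variation of constants formula
\begin{equation*}
\W(t,s)\zeta=\Uo(t,s)\zeta+\int_s^t \Uo(t,r)\int_s^r \Delta(r,\tau)\W(\tau,s)\zeta\,\mathrm{d}\tau\,\mathrm{d}r .
\end{equation*}
For this step I need $\W(r,s)\zeta\in\Y$, which is property $(\mathsf{ii})$ of Definition \ref{DefResolvant}. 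I also need continuity of $\Delta$ into $\mathsf{B}(\Y,\mathbb{X})$, so that the inner integrand is continuous. Call the double integral $Q(t,s)\zeta$.

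The main step is to show that $(t,s)\mapsto Q(t,s)$ is norm-continuous. Here lies the difficulty: $\Delta(r,\tau)$ is bounded only from $\Y$ to $\mathbb{X}$, so $Q(t,s)$ is not obviously a bounded operator on $\mathbb{X}$. My first attempt is to swap the order of integration and write
\begin{equation*}
Q(t,s)\zeta=\int_s^t\Big(\int_\tau^t \Uo(t,r)\Delta(r,\tau)\,\mathrm{d}r\Big)\W(\tau,s)\zeta\,\mathrm{d}\tau .
\end{equation*}
I then use the parabolic structure from \textbf{(E1)-(E3)}, which gives the estimate $\|\Uo(t,r)\mathcal{A}(r)\|\le C/(t-r)$. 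Writing $\Delta(r,\tau)=[\Delta(r,\tau)\mathcal{A}(r)^{-1}]\,\mathcal{A}(r)$ isn't the right order, so instead I expect to use the resolvent equation in the form $\W(\tau,s)\zeta-\zeta=\int_s^\tau \partial_\sigma\ldots$. Failing that, the cleanest route is to assume, as is standard in Grimmer's and Diop et al.'s framework, that the kernel operator $K(t,\tau):=\int_\tau^t\Uo(t,r)\Delta(r,\tau)\,\mathrm{d}r$ extends to a bounded operator on $\mathbb{X}$, and moreover to a norm-continuous one in $(t,\tau)$. This is the step where hypotheses from \cite{diop2022integrodifferential} must be invoked. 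With that in hand, $\W$ satisfies the Volterra equation
\begin{equation*}
\W(t,s)=\Uo(t,s)+\int_s^t K(t,\tau)\W(\tau,s)\,\mathrm{d}\tau
\end{equation*}
on $\mathbb{X}$, first on the dense set $\Y$ and then on all of $\mathbb{X}$ by density. Density is available by \textbf{(E1)}, and the extension is justified by the bound $M_a$.

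Next I prove continuity, fixing $t_0>s_0$ and taking $(t,s)\to(t_0,s_0)$. The first term $\Uo(t,s)$ converges in norm by hypothesis. For the integral term, I split the difference of the integrals into two pieces. The first is an integral over the small intervals between $s,s_0$ and $t,t_0$; it is bounded by $M_a\sup\|K\|$ times the interval length. The second is an integral over the common interval of differences of the integrands. There I must compare $\W(\tau,s)$ with $\W(\tau,s_0)$, which is only known to be strongly continuous. To handle this I would run a Gronwall argument: set $\phi(t,s)=\|\W(t,s)-\W(t_0,s_0)\|$-type quantities and absorb the Volterra term, using the boundedness of $K$ on compact sets. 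Alternatively, I can iterate the Volterra equation into a Neumann series $\W=\sum_n \Uo_n$, where each term is a convolution of the norm-continuous $\Uo$ with the norm-continuous $K$. Each such term is then norm-continuous for $t>s$. The series converges uniformly in operator norm because the $n$-th term is bounded by $M(\sup\|K\|)^n (t-s)^n/n!$. A uniform limit of norm-continuous functions is norm-continuous, which gives the conclusion on $T>t-s>0$. I expect the main obstacle to be justifying that $K$ is bounded and norm-continuous on $\mathbb{X}$, given that $\Delta$ is only $\Y$-bounded.
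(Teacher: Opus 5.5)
The paper gives no proof of this statement (it is imported from Diop et al.), so I judge your argument on its own. There is a genuine gap.

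Your variation-of-constants formula is correct, and your closing Neumann-series argument would work with a bounded kernel. However, the step you flag as the main obstacle is the entire content of the theorem, and you close it by assuming it. That assumption does not follow from \textbf{(R)}, \textbf{(E1)}--\textbf{(E3)}. Take $\mathbb{X}=\ell^2\times\ell^2$, $\mathcal{A}(t)\equiv\mathrm{diag}(-I,A_0)$ with $A_0=\mathrm{diag}(-n^2)$, and $\Delta(t,s)(y_1,y_2)=(A_0y_2,0)$. This $\Delta$ is closed on $\ell^2\times\mathsf{D}(A_0)=\mathsf{D}(\mathcal{A})$ and $\Y$-bounded. Everything is time-independent, all hypotheses hold, and $\Uo(t,s)$ is norm-continuous for $t>s$. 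Yet
\[
K(t,\tau)(y_1,y_2)=\big(1-e^{-(t-\tau)}\big)(A_0y_2,0),
\]
which is unbounded on $\mathbb{X}$ for every $t>\tau$. The defect is structural. In $\int_\tau^t\Uo(t,r)\Delta(r,\tau)\,\mathrm{d}r$ the operator $\Delta$ acts directly on the input vector, and smoothing by $\Uo(t,r)$ on the left cannot repair a loss of regularity on the right. In favourable cases such as $\Delta=b\mathcal{A}$, $K$ is bounded only through a cancellation specific to that form. Even then, in the autonomous case $K(t,\tau)=b(\Uo(t,\tau)-I)$, which is not norm-continuous up to the diagonal. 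So you could only ask for continuity when $t>\tau$, which is all your splitting argument actually uses.

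The missing idea is to integrate in the variable on which $\Delta$ acts, so that the unboundedness is absorbed by a time-integrated family with values in $\Y$. The cleanest way is the backward equation. For $\zeta\in\Y$, differentiate $r\mapsto\W(t,r)\Uo(r,s)\zeta$ using the second line of \eqref{partial} and $\partial_r\Uo(r,s)\zeta=\mathcal{A}(r)\Uo(r,s)\zeta$. The $\mathcal{A}$-terms cancel, and Fubini gives
\[
\W(t,s)=\Uo(t,s)+\int_s^t\W(t,\sigma)\,\mathcal{P}(\sigma,s)\,\mathrm{d}\sigma,\qquad \mathcal{P}(\sigma,s)=\int_s^\sigma\Delta(\sigma,r)\Uo(r,s)\,\mathrm{d}r .
\]
Next show that $H_s(r)=\int_s^r\Uo(\rho,s)\,\mathrm{d}\rho$ maps $\mathbb{X}$ into $\Y$ with a bound uniform in $s\le r$. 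This follows from $\Uo(r,s)x-x=\int_s^r\mathcal{A}(\rho)\Uo(\rho,s)x\,\mathrm{d}\rho$, an integration by parts using the differentiability of $\mathcal{A}(\cdot)$ in \textbf{(C1)}, and Gronwall. Then integrate by parts in $r$, using the differentiability of $\Delta$ in its second variable supplied by \textbf{(C1)} and \textbf{(C3)}:
\[
\mathcal{P}(\sigma,s)=\Delta(\sigma,\sigma)H_s(\sigma)-\int_s^\sigma\partial_r\Delta(\sigma,r)H_s(r)\,\mathrm{d}r .
\]
So $\mathcal{P}$ is bounded on $\mathbb{X}$, uniformly on the triangle. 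It involves only the known family $\Uo$, and one checks, with some care near $r=s$, that it is norm-continuous for $\sigma>s$. With this kernel your Neumann series $V_0=\Uo$, $V_{n+1}(t,s)=\int_s^tV_n(t,\sigma)\mathcal{P}(\sigma,s)\,\mathrm{d}\sigma$ goes through as you describe. Uniqueness for the Volterra equation then identifies its sum with $\W$.

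If you keep your forward formula instead, you must integrate by parts in $\tau$ against $\int_s^\tau\W(\sigma,s)\,\mathrm{d}\sigma\in\Y$ rather than swap the order of integration. The resulting kernel still contains the unknown $\W$, so continuity in $s$ then needs a further argument.
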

We end this section by the following useful lemma introduced by Ezzinbi et al.\cite{Ezz}.
\begin{lem}\label{uni}
    Let $\mathbb{X}$ be a Banach space and $(T_n)_{n\geq 1}$ be a sequence of bounded linear maps \textcolor{blue}{on $\mathbb{X}$}. Assume that $T_n\zeta\rightarrow T\zeta$ for all $\zeta\in\mathbb{X}$ and for some $T\in\mathcal{L}(\mathbb{X})$. Then, for any compact set $K$ in $\mathbb{X}$, $T_n$ converges to $T$ uniformly in $K$, namely,
	$$
	\displaystyle\sup_{\zeta\in K}\lVert T_n(\zeta)-T(\zeta)\rVert\rightarrow 0,\;\;\mbox{ as }\;\; n\rightarrow +\infty.
	$$
\end{lem}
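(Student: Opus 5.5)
The plan is to combine two facts: pointwise convergence on a compact set is uniform once the family is equicontinuous, and a strongly convergent sequence of operators is equicontinuous because of the uniform boundedness principle.

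\emph{Step 1.} Since $T_n\zeta\to T\zeta$ for every $\zeta\in\mathbb{X}$, each orbit $\{T_n\zeta\}_{n\ge1}$ is bounded in $\mathbb{X}$. The Banach--Steinhaus theorem then gives
\[
L:=\sup_{n\ge1}\|T_n\|_{\mathcal{L}(\mathbb{X})}<\infty .
\]
Set $L':=\max\{L,\|T\|\}$. From this,
\[
\|(T_n-T)\zeta-(T_n-T)\eta\|\le 2L'\|\zeta-\eta\|
\]
for all $n$ and all $\zeta,\eta\in\mathbb{X}$. So the maps $T_n-T$ are uniformly Lipschitz, and this is the equicontinuity we need.

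\emph{Step 2.} Fix $\varepsilon>0$ and a compact set $K\subset\mathbb{X}$. Because $K$ is totally bounded, there are finitely many points $\zeta_1,\dots,\zeta_m\in K$ with $K\subset\bigcup_{i=1}^m B(\zeta_i,\varepsilon/(4L'+1))$. Pointwise convergence at these finitely many points gives some $N$ with
\[
\|T_n\zeta_i-T\zeta_i\|<\varepsilon/2 \quad\text{for all } n\ge N \text{ and } i=1,\dots,m .
\]

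\emph{Step 3.} Take any $\zeta\in K$ and choose $i$ with $\|\zeta-\zeta_i\|<\varepsilon/(4L'+1)$. For $n\ge N$,
\[
\|T_n\zeta-T\zeta\|\le \|(T_n-T)(\zeta-\zeta_i)\|+\|T_n\zeta_i-T\zeta_i\|<\tfrac{2L'\varepsilon}{4L'+1}+\tfrac{\varepsilon}{2}<\varepsilon .
\]
This bound does not depend on $\zeta\in K$, so $\sup_{\zeta\in K}\|T_n\zeta-T\zeta\|\le\varepsilon$ for all $n\ge N$, which proves the claim.

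The only step that needs more than routine estimation is Step 1. It uses completeness of $\mathbb{X}$ through the uniform boundedness principle, and without it the Lipschitz constants of $T_n$ need not be uniform. The remaining steps are the standard $\varepsilon/3$-type argument.
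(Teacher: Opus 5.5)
Your proof is correct and complete. Banach--Steinhaus makes the maps $T_n-T$ uniformly Lipschitz, and a finite $\varepsilon/(4L'+1)$-net of $K$ then reduces uniform convergence on $K$ to convergence at finitely many points. The paper gives no proof of its own for this lemma (it only cites Ezzinbi et al.), and your argument is the standard proof of the cited fact.
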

\subsection{ Equi-regulated functions and measure of noncompactness}
\par This section begins with a review of equiregulated functions. Next, we examine the Hausdorff measure of noncompactness and related facts. Finally, the system's (\ref{sys}) mild solution is expressed.\\
Let $\mathbb{X}$ be a Banach space endowed with the norm $\|\cdot\|$ and $I=[0, a]$ a closed interval of the real line. 
\begin{definition}\cite{mesquita2012measure}
	A function $f:I\to \mathbb{X}$ is called regulated on $I$, if the limits
	$$
	\lim_{s\to t^-} f(s) = f(t^-),\ t\in (0, a]\ \text{and}\ \lim_{s\to t^+} f(s) = f(t^+), \ t\in [0, a)
	$$ 
	exist and are finite.  
\end{definition}
\noindent Throughout this paper $\mathcal{R}(I,\mathbb{X})$ denote the space composed of all regulated functions $f:I\to \mathbb{X}$. Also, it is obvious that the set of discontinuities of a regulated function is at most countable and that the space of regulated functions $\mathcal{R}(I,\mathbb{X})$ is a Banach space endowed with the norm $\|f\|_\infty = \sup_{t\in I}\|f(t)\|$  (see \cite{honig1980volterra} for more details). Consider $L^2(I, \mathbb{K})$ as a Banach space of all $\mathbb{K}$-valued Bochner square-integrable functions defined on $I$ endowed of the norm 
$$
\|\zeta\|_{L^2}=\bigg(\dint_0^a\|\zeta(t)\|_{\mathbb{K}}^2\mathrm{d}t\bigg)^{\frac{1}{2}},\, \zeta\in L^2(I, \mathbb{K}).
$$
\begin{lem}\label{lemme1}
	Let $f:I\to \mathbb{X}$ and $h:I\to\mathbb{R}$ be functions such that $h$ is regulated and $\int_0^a f\mathrm{d}h$ exists. Then for every $t_0\in [0, a]$, the function $$p(t)=\int_{t_0}^{t}f\mathrm{d}h,\, t\in [0, a],$$ is regulated and satisfies 
	\begin{align*}
	p(t^+) = p(t) + f(t)\mathbb{V}^+ h(t), t\in [0, a),\\
	p(t^-) = p(t) - f(t)\mathbb{V}^- h(t), t\in (0, a],
	\end{align*}
	where $\mathbb{V}^+h(t)=h(t^+) - h(t)$ and $\mathbb{V}^-h(t)=h(t) - h(t^-)$. $h(t^-)$ and $h(t^+)$ denote the function's left limit and right limit $h$ at the time $t$, respectively. 
\end{lem}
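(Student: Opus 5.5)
The plan is to prove Lemma \ref{lemme1} directly from the definition of the Kurzweil--Stieltjes (Henstock) integral together with two standard facts about it: the Hake-type theorem and the additivity of the indefinite integral. I treat the integral $\int f\,\mathrm{d}h$ in the Kurzweil sense, which is the framework of \cite{mesquita2012measure}.

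First, the existence of $\int_0^a f\,\mathrm{d}h$ implies that $\int_c^d f\,\mathrm{d}h$ exists for every $[c,d]\subset[0,a]$. It also gives additivity: $\int_c^d+\int_d^e=\int_c^e$. So $p$ is well defined, and for $t<s$ we have $p(s)-p(t)=\int_t^s f\,\mathrm{d}h$. It therefore suffices to prove, for $t\in[0,a)$,
\begin{equation*}
\lim_{s\to t^+}\int_t^s f\,\mathrm{d}h = f(t)\,\mathbb{V}^+h(t),
\end{equation*}
and the symmetric statement at $t^-$.

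Next, I split the integral as
\begin{equation*}
\int_t^s f\,\mathrm{d}h=\Big(\int_t^s f\,\mathrm{d}h - f(t)\big(h(s)-h(t)\big)\Big)+f(t)\big(h(s)-h(t)\big).
\end{equation*}
The second term tends to $f(t)\big(h(t^+)-h(t)\big)$ because $h$ is regulated. For the first term I use the Hake-type theorem for Kurzweil--Stieltjes integrals (see e.g. Schwabik, or the version recalled in \cite{mesquita2012measure}). It says
\begin{equation*}
\lim_{s\to t^+}\Big[\int_t^s f\,\mathrm{d}h - f(t)\big(h(s)-h(t)\big)\Big]=0.
\end{equation*}

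To prove this limit, I use the Saks--Henstock lemma. Fix $\varepsilon>0$ and a gauge $\delta$ on $[0,a]$ with the following property: any $\delta$-fine tagged partial division $\{(\tau_i,[a_i,b_i])\}$ satisfies
\begin{equation*}
\sum_i\Big\|\int_{a_i}^{b_i}f\,\mathrm{d}h - f(\tau_i)\big(h(b_i)-h(a_i)\big)\Big\|\le\varepsilon.
\end{equation*}
For $s\in(t,t+\delta(t))$, the single pair $(t,[t,s])$ is $\delta$-fine, which yields the estimate with bound $\varepsilon$. The left limit follows identically using the pair $(t,[s,t])$ for $s\in(t-\delta(t),t)$. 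That pair gives $p(t)-p(s)\to f(t)\big(h(t)-h(t^-)\big)$, which is $p(t^-)=p(t)-f(t)\mathbb{V}^-h(t)$.

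Finally, regularity of $p$ follows at once, since both one-sided limits exist and are finite at every point. The only delicate step is the Saks--Henstock lemma. I would cite it rather than reprove it, noting that it holds for Banach-space-valued $f$ with real $h$ because the standard proof uses only the Cauchy criterion and additivity.
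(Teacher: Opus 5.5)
The paper gives no proof of this lemma; it is quoted from the Kurzweil--Stieltjes theory of regulated functions. Your argument is the standard proof found there, and it is correct in substance: existence and additivity on subintervals, then the Saks--Henstock estimate on the single tagged interval $(t,[t,s])$ or $(t,[s,t])$, then regulatedness of $h$.

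One statement in it needs fixing. You write the Saks--Henstock lemma with a sum of norms, $\sum_i\|\cdot\|\le\varepsilon$, and claim it holds for Banach-valued $f$ because the proof uses only the Cauchy criterion and additivity. That strong form is false in general when $\dim\mathbb{X}=\infty$; it characterizes the strongly (variationally) integrable functions.

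For a counterexample, take $h(t)=t$ and $f(t)=e_t$, where $\{e_t\}_{t\in[0,1]}$ is an orthonormal family in $\ell^2([0,1])$. For a $\delta$-fine partition with constant gauge, each tag is shared by at most two intervals, so the Riemann sums on $[c,d]$ have norm at most $2\sqrt{\delta(d-c)}$. Hence every $\int_c^d f\,\mathrm{d}h$ equals $0$. Yet for any tagged partition of $[0,1]$, the sum of norms $\sum_i\|f(\tau_i)(b_i-a_i)-0\|$ equals $1$.

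What the Cauchy criterion, additivity and Cousin's lemma actually yield is the weak form:
$\bigl\|\sum_i\bigl(\int_{a_i}^{b_i}f\,\mathrm{d}h-f(\tau_i)(h(b_i)-h(a_i))\bigr)\bigr\|\le\varepsilon$.
You only apply the lemma to a partial division with a single tagged interval, where the two forms coincide. So your proof goes through unchanged once you cite the weak form.
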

\begin{definition}\cite{mesquita2012measure}\label{def22}
	A set $\mathcal{G}\subset \mathcal{R}(I, \mathbb{X})$ is called equiregulated if for every $\alpha > 0$ and $t_0\in I$, there is a $\mu > 0$ such that: 
	\begin{enumerate}
		\item[(i)] if $v\in\mathcal{G}, t^\prime\in I$ and $t_0 - \mu<t^\prime<t_0$, then $||v(t_0^-) - v(t^\prime)||<\alpha.$
		\item[(ii)] if $v\in\mathcal{G}, t^\prime\in I$ and $t_0<t^\prime<t_0 + \mu$, then $||v(t^\prime)-v(t_0^+)||<\alpha$.
	\end{enumerate}
\end{definition}
\begin{lem}\cite{mesquita2012measure}\label{lemm23}
	Let $\left\{v_n\right\}_{n=1}^\infty$ be a sequence of functions from $I$ to $\mathbb{X}$. If $v_n$ converges pointwise to $v_0$ as $n\to \infty$ and the sequence $\left\{v_n\right\}_{n=1}^\infty$ is equiregulated, then $v_n$ converges uniformly to $v_0$.
\end{lem}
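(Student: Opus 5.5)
The argument is by contradiction, in the style of an Arzelà–Ascoli proof, with equiregularity replacing equicontinuity. Suppose $v_n\to v_0$ pointwise on $I$ but not uniformly. Then there exist $\varepsilon>0$, a subsequence (still denoted $v_n$) and points $t_n\in I$ with $\|v_n(t_n)-v_0(t_n)\|\ge\varepsilon$ for all $n$. Since $I=[0,a]$ is compact, we may pass to a further subsequence with $t_n\to t_0\in I$. Passing once more, we may assume that either $t_n=t_0$ for all $n$, or $t_n<t_0$ for all $n$ with $t_n\uparrow t_0$, or $t_n>t_0$ for all $n$ with $t_n\downarrow t_0$. The first case contradicts pointwise convergence at $t_0$ immediately. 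The other two cases are symmetric, so we treat only $t_n\uparrow t_0$, using part (i) of Definition \ref{def22}.

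\emph{Step 1: pass equiregularity to the limit.} Fix $\alpha=\varepsilon/4$ and let $\mu>0$ be given by Definition \ref{def22} at $t_0$. For any $t',t''\in(t_0-\mu,t_0)$ and all $n$ we have
\[
\|v_n(t')-v_n(t'')\|\le\|v_n(t')-v_n(t_0^-)\|+\|v_n(t_0^-)-v_n(t'')\|<2\alpha .
\]
Letting $n\to\infty$ pointwise gives $\|v_0(t')-v_0(t'')\|\le 2\alpha$ on $(t_0-\mu,t_0)$.

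\emph{Step 2: compare near $t_0$.} Fix $s\in(t_0-\mu,t_0)$. Pointwise convergence gives $N$ such that $\|v_n(s)-v_0(s)\|<\alpha$ for $n\ge N$. For $n$ large we also have $t_n\in(t_0-\mu,t_0)$, and then
\[
\|v_n(t_n)-v_0(t_n)\|\le\|v_n(t_n)-v_n(s)\|+\|v_n(s)-v_0(s)\|+\|v_0(s)-v_0(t_n)\|<2\alpha+\alpha+2\alpha=5\alpha .
\]
This does not yet beat $\varepsilon=4\alpha$, so we choose $\alpha=\varepsilon/6$ instead, which gives $\|v_n(t_n)-v_0(t_n)\|<5\varepsilon/6<\varepsilon$, a contradiction. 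The case $t_n\downarrow t_0$ is identical, using (ii) of Definition \ref{def22} and the right limits $v_n(t_0^+)$.

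\emph{Main obstacle.} The delicate point is that the definition controls $v_n$ only relative to the one-sided limits $v_n(t_0^\mp)$, not relative to $v_n(t_0)$. We therefore avoid ever comparing with the value at $t_0$: the case $t_n=t_0$ is handled by pointwise convergence alone, and in the one-sided cases only the oscillation bound $2\alpha$ of Step 1 is used. One should also check that $v_n(t_0^-)$ exists. This is implicit, since equiregularity is defined for subsets of $\mathcal{R}(I,\mathbb{X})$; we will assume the $v_n$ are regulated, which is the setting intended in \cite{mesquita2012measure}. No regularity of $v_0$ needs to be assumed, because Step 1 supplies the oscillation control we need.
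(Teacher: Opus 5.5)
Your proof is correct. The paper gives no proof of this lemma; it simply cites \cite{mesquita2012measure}, so there is no in-paper argument to compare against.

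\textbf{Comparison with the usual argument.} The main difference from the standard direct proof is how compactness of $I$ enters. You use it sequentially, via Bolzano--Weierstrass and a contradiction localized at one accumulation point $t_0$. The direct route instead uses a finite cover. It fixes $\alpha$ and covers $I$ by the intervals $(t-\mu_t,t+\mu_t)$ from Definition~\ref{def22}. It then refines the finite subcover into a division $0=s_0<s_1<\dots<s_m=a$, where each open subinterval $(s_{j-1},s_j)$ lies in one of the one-sided pieces $(t-\mu_t,t)$ or $(t,t+\mu_t)$. So every $v_n$ oscillates by less than $2\alpha$ there, and by the limit passage of your Step 1 so does $v_0$. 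Finally it takes $N$ from pointwise convergence at the points $s_j$ and at one interior point of each subinterval. This yields the same $5\alpha$ bound you obtain.

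\textbf{What each buys.} The direct version gives an explicit $N(\varepsilon)$ and exhibits the finite-division characterization of equiregularity. Yours is shorter and needs only a single auxiliary point $s$.

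\textbf{Common core.} Both rest on the observation you isolate: the one-sided oscillation bound survives the pointwise limit. Hence no regularity of $v_0$ has to be assumed, and regularity of $v_0$ in fact follows from it.

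\textbf{Cosmetic fix.} Since $\mu$ depends on $\alpha$, fix $\alpha=\varepsilon/6$ before invoking Definition~\ref{def22}, rather than starting with $\varepsilon/4$ and revising at the end.
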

\begin{lem}\cite{cao2016existence}\label{lemme 2.10}
	Let $\mathcal{V}\subset\mathcal{R}(I, \mathbb{X})$. If $\mathcal{V}$ is bounded and equiregulated, then the set $\overline{co}(\mathcal{V})$ is also bounded and equiregulated.
\end{lem}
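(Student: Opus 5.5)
The statement is Lemma \ref{lemme 2.10}: if $\mathcal{V}\subset\mathcal{R}(I,\mathbb{X})$ is bounded and equiregulated, then so is $\overline{co}(\mathcal{V})$. I would handle the convex hull $co(\mathcal{V})$ first and then pass to the closure.

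\textbf{Boundedness.} If $\|v\|_\infty\le r$ for all $v\in\mathcal{V}$, then every convex combination $\sum_{i=1}^n\lambda_i v_i$ has sup-norm at most $\sum\lambda_i r=r$. Uniform limits of such combinations keep the bound $r$. Hence $\overline{co}(\mathcal{V})$ lies in the closed ball of radius $r$.

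\textbf{Convex hull.} Fix $\alpha>0$ and $t_0\in I$, and take $\mu>0$ from Definition \ref{def22} for $\mathcal{V}$ with the tolerance $\alpha/2$. Let $w=\sum_{i=1}^n\lambda_i v_i$ with $v_i\in\mathcal{V}$, $\lambda_i\ge0$ and $\sum\lambda_i=1$.
\begin{itemize}
\item One-sided limits are linear, so $w(t_0^-)=\sum\lambda_i v_i(t_0^-)$.
\item For $t_0-\mu<t'<t_0$ this gives
\[
\|w(t_0^-)-w(t')\|\le\sum_{i=1}^n\lambda_i\|v_i(t_0^-)-v_i(t')\|<\alpha/2 .
\]
\item The right-hand condition (ii) follows in the same way.
\end{itemize}
The key point is that the same $\mu$ works for every element of $co(\mathcal{V})$, because the estimate uses only convexity of the weights.

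\textbf{Closure.} Let $w\in\overline{co}(\mathcal{V})$, so there are $w_k\in co(\mathcal{V})$ with $\|w_k-w\|_\infty\to0$.
\begin{itemize}
\item For each fixed $k$ and $t'\in(t_0-\mu,t_0)$, we have $\|w_k(t_0^-)-w_k(t')\|<\alpha/2$ from the previous step.
\item Uniform convergence lets limits be interchanged, so $w_k(t_0^-)\to w(t_0^-)$. Concretely, $\|w_k(s)-w(s)\|\le\|w_k-w\|_\infty$ for every $s$, and letting $s\to t_0^-$ gives $\|w_k(t_0^-)-w(t_0^-)\|\le\|w_k-w\|_\infty$.
\item Letting $k\to\infty$ in the inequality for $w_k$ yields $\|w(t_0^-)-w(t')\|\le\alpha/2<\alpha$.
\item The right-hand condition is handled identically.
\end{itemize}
This is why $\alpha/2$ was used in the hull step: it absorbs the passage from strict to non-strict inequality in the limit. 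Since $\mu$ does not depend on $w$, the set $\overline{co}(\mathcal{V})$ is equiregulated.

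The only mildly delicate point is this interchange of the one-sided limit with the uniform limit. It is resolved by the inequality $\|w_k(t_0^\pm)-w(t_0^\pm)\|\le\|w_k-w\|_\infty$. The same inequality shows that $w$ is itself regulated, since $\mathcal{R}(I,\mathbb{X})$ is closed under uniform limits, so $w(t_0^\pm)$ is well defined.
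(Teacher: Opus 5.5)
Your proof is correct and complete. You fix $\mu$ for $\mathcal{V}$ at tolerance $\alpha/2$, transfer it to convex combinations by the triangle inequality, and then pass to uniform limits using $\|w_k(t_0^{\pm})-w(t_0^{\pm})\|\le\|w_k-w\|_\infty$. The paper gives no proof of its own and simply cites the lemma from Cao and Sun, so there is no route to compare against; your argument is the standard one and suffices.
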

\noindent Let $\mathbb{X}$ be a Banach space . The Hausdorff measure of noncompactness of a bounded subset $\mathcal{U}$ of $\mathbb{X}$ is defined as the infimum of the set of all real numbers $\alpha >0$ such that $\mathcal{U}$ can be covered by a finite number of balls of radius smaller than $\alpha$, that is,
$$
\lambda(\mathcal{U})=\inf\{\alpha >0: \mathcal{U}\subset\displaystyle\cup_{i=1}^n\mathcal{B}(\eta_i, r_i), \eta_i\in\mathbb{X}, r_i<\alpha (i=1,\cdots , n), n\in\mathbb{N}\},
$$
where $\mathcal{B}(\eta_i, r_i)$ denotes the open ball centered at $\eta_i$ and of radius $r_i$. 
\begin{lem}\cite{banas2014sequence}
	Let $\mathcal{U}, V$ be bounded subsets of $\mathbb{X}$ and $\kappa\in \mathbb{R}$. Then the following properties are satisfied.
	\begin{enumerate}
		\item[(i)] $\lambda(\mathcal{U})=0$ if and only if $\mathcal{U}$ is relatively compact;
		\item[(ii)] $\mathcal{U}\subseteq V$ implies $\lambda(\mathcal{U})\le \lambda(V)$; 
		\item[(iii)] $\lambda(\mathcal{U})= \lambda(\overline{\mathcal{U}})$
		\item[(iv)] $\lambda(\mathcal{U} + V) \leq \lambda(\mathcal{U}) + \lambda(V)$, where $\mathcal{U} + V = \left\{v| v= x + y: x\in \mathcal{U}, y\in V\right\}$;
		\item[(v)] $\lambda(\mathcal{U}\cup V)=\max\left\{\lambda(\mathcal{U}),\lambda(V)\right\}$;
		\item[(vi)] $\lambda(\kappa\mathcal{U}) = |\kappa|\lambda(\mathcal{U})$ for any $\kappa\in\mathbb{R}$;
		\item[(vii)] $\lambda(co(\mathcal{U}))=\lambda(\mathcal{U})$;
		\item[(viii)] If the map $\psi:\mathsf{D}(\psi)\subseteq \mathbb{X}\to Z$ is lipschitz continuous with a constant $k$, then $\lambda_Z(\psi\Omega)\leq k\lambda(\Omega)$ for any bounded subset \textcolor{blue}{$\Omega\in\mathsf{D}(\psi)$}, where $Z$ is a Banach space.
	\end{enumerate}
\end{lem}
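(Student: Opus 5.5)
The plan is to check the eight properties of the Hausdorff measure $\lambda$ one by one, directly from its definition as the infimum of admissible radii $\alpha$ for finite covers by balls of radius less than $\alpha$. The common device is to take an arbitrary finite ball cover of one set and convert it into a finite ball cover of the other set, then pass to the infimum.

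For (i): if $\lambda(\mathcal{U})=0$, then for every $\varepsilon>0$ the set $\mathcal{U}$ has a finite $\varepsilon$-net, so it is totally bounded. Since $\mathbb{X}$ is complete, $\overline{\mathcal{U}}$ is then compact. Conversely, a relatively compact set is totally bounded, so it can be covered by finitely many balls of any prescribed radius, giving $\lambda(\mathcal{U})=0$.

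For (ii), any cover of $V$ also covers $\mathcal{U}$. For (iii), the inequality $\lambda(\mathcal{U})\le\lambda(\overline{\mathcal{U}})$ follows from (ii). For the reverse, a cover of $\mathcal{U}$ by balls $\mathcal{B}(\eta_i,r_i)$ yields a cover of $\overline{\mathcal{U}}$ by the slightly enlarged balls $\mathcal{B}(\eta_i,r_i+\varepsilon)$; letting $\varepsilon\to0$ gives the claim.

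For (iv), take covers $\{\mathcal{B}(x_i,r_i)\}$ of $\mathcal{U}$ and $\{\mathcal{B}(y_j,s_j)\}$ of $V$. Then the balls $\mathcal{B}(x_i+y_j,r_i+s_j)$ cover $\mathcal{U}+V$, which gives subadditivity. For (v), the inequality $\ge$ follows from (ii); for $\le$, the union of the two finite covers is a finite cover of $\mathcal{U}\cup V$. For (vi), scaling the centers and radii by $|\kappa|$ transfers covers in both directions when $\kappa\neq0$; the case $\kappa=0$ is trivial because $\{0\}$ is compact.

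For (viii), if $\mathcal{B}(\eta_i,r_i)$ cover $\Omega$, pick points $\omega_i\in\Omega\cap\mathcal{B}(\eta_i,r_i)$. Then $\Omega\cap\mathcal{B}(\eta_i,r_i)\subset\mathcal{B}(\omega_i,2r_i)$, which is the usual loss of a factor $2$. To avoid this loss I would use the equivalent formulation of $\lambda$ with centers taken in the set itself, or simply note that for Lipschitz maps one gets $\psi(\mathcal{B}(\eta_i,r_i)\cap\Omega)\subset\mathcal{B}(\psi(\eta_i),kr_i)$ whenever the centers lie in $\mathsf{D}(\psi)$. If the centers cannot be taken in $\mathsf{D}(\psi)$, this is the one step where I would extend $\psi$ or accept the centered-in-set variant.

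The main obstacle is (vii), $\lambda(co\,\mathcal{U})=\lambda(\mathcal{U})$. The inequality $\ge$ follows from (ii). For $\le$, fix $r>\lambda(\mathcal{U})$ and a cover $\mathcal{U}\subset\bigcup_{i=1}^n\mathcal{B}(\eta_i,r)$, and set $K=co\{\eta_1,\dots,\eta_n\}$, which is compact. Every convex combination $\sum_j t_j u_j$ of points of $\mathcal{U}$ lies within distance $r$ of the corresponding combination of centers, so $co\,\mathcal{U}\subset K+\mathcal{B}(0,r)$. Then (iv) and (i) give $\lambda(co\,\mathcal{U})\le\lambda(K)+\lambda(\mathcal{B}(0,r))\le 0+r$. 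Here I would use $\lambda(\mathcal{B}(0,r))\le r$, which is immediate: a single ball of radius $r$ is admissible for every $\alpha>r$. Letting $r\downarrow\lambda(\mathcal{U})$ concludes.
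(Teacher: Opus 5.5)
Your arguments for (i)--(vii) are correct and standard. The paper gives no proof of this lemma, only the citation, and for (vii) the inclusion $co\,\mathcal{U}\subset co\{\eta_1,\dots,\eta_n\}+\mathcal{B}(0,r)$ is exactly the right device. The gap is in (viii), and it cannot be closed as stated. The ``equivalent formulation of $\lambda$ with centers taken in the set itself'' is not equivalent. If $\lambda_{\mathrm{in}}$ denotes that variant, one only has $\lambda(\Omega)\le\lambda_{\mathrm{in}}(\Omega)\le 2\lambda(\Omega)$, which is precisely the factor $2$ you were trying to avoid. Likewise, a $k$-Lipschitz map on a subset need not have a $k$-Lipschitz extension to all of $\mathbb{X}$, so the ``extend $\psi$'' fallback is unavailable in general.

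In fact (viii) is false when $\mathsf{D}(\psi)\neq\mathbb{X}$. Take $\mathbb{X}=\ell^\infty$, $Z=c_0$, $\Omega=\mathsf{D}(\psi)=\{e_n:n\ge1\}$, and $\psi$ the inclusion, which is an isometry, so $k=1$. The single center $(\tfrac12,\tfrac12,\dots)\in\ell^\infty$ gives $\lambda(\Omega)\le\tfrac12$. Now suppose $\psi\Omega$ were covered by balls $\mathcal{B}(y^i,r)$, $i=1,\dots,m$, with $y^i\in c_0$ and $r<1$. Choose $n$ with $|y^i_n|<1-r$ for all $i$. Then $\|e_n-y^i\|_\infty\ge 1-|y^i_n|>r$ for every $i$, a contradiction. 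Hence $\lambda_Z(\psi\Omega)=1>k\lambda(\Omega)$. The same example shows that this inclusion has no $1$-Lipschitz extension to $\ell^\infty$.

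What your ideas do prove is the following. If $\mathsf{D}(\psi)=\mathbb{X}$, then $\psi(\mathcal{B}(\eta_i,r_i))\subset\mathcal{B}(\psi(\eta_i),kr_i)$ gives $\lambda_Z(\psi\Omega)\le k\lambda(\Omega)$. In general, your recentering at points $\omega_i\in\Omega$ gives $\lambda_Z(\psi\Omega)\le 2k\lambda(\Omega)$, and the example shows this is sharp. The full-domain version is all the paper actually needs, since (viii) is only invoked for bounded linear operators defined on the whole space, such as $\W(t,s)$ and $\W(t,s)\mathcal{V}$.
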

\noindent Let $\mathcal{V}$ be a subset of $\mathcal{R}(I, \mathbb{X})$. For any fixed $t\in I$, we denote $\mathcal{V}(t)=\{v(t):v\in \mathcal{V}\}$. Next, we will present some results of the Hausdorff measure of noncompactness in the space of regulated functions $\mathcal{R}(I, \mathbb{X})$.
\begin{lem}\cite{mesquita2012measure}
	Let $\mathcal{V}\subset\mathcal{R}(I, \mathbb{X})$ be bounded and equiregulated on $I$. Then
	\begin{enumerate}
		\item[(i)] $\lambda(\mathcal{V}(t))$ is regulated on $I$.
		\item[(ii)] $\lambda(\mathcal{V})=\sup\{\lambda(\mathcal{V}(t)): t\in I\}$.
	\end{enumerate}
\end{lem}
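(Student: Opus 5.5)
The plan is to prove the two parts separately, working directly from the definitions of the Hausdorff measure of noncompactness and of an equiregulated set (Definition \ref{def22}). Throughout, put $\varphi(t):=\lambda(\mathcal{V}(t))$. For each $t$ the set $\mathcal{V}(t)$ is bounded because $\mathcal{V}$ is bounded in $\|\cdot\|_\infty$, so $\varphi$ is a well-defined finite function on $I$. The basic tool is the elementary Lipschitz-type estimate for $\lambda$: if $A,B\subset\mathbb{X}$ are bounded and there is a surjection $a\mapsto b(a)$ from $A$ onto $B$ with $\sup_{a\in A}\|a-b(a)\|\le\varepsilon$, then $|\lambda(A)-\lambda(B)|\le\varepsilon$. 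To see this, take any finite cover of $A$ by balls of radii $r_i$ and enlarge every radius by $\varepsilon$; this covers $B$. The same argument works with the roles of $A$ and $B$ reversed.

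\textbf{Part (i).} Fix $t_0\in(0,a]$ and let $\mathcal{V}(t_0^-)=\{v(t_0^-):v\in\mathcal{V}\}$. Given $\alpha>0$, equiregularity provides $\mu>0$ such that $\|v(t')-v(t_0^-)\|<\alpha$ for all $v\in\mathcal{V}$ and all $t'\in(t_0-\mu,t_0)$. Applying the estimate above to the correspondence $v(t')\mapsto v(t_0^-)$, we get $|\varphi(t')-\lambda(\mathcal{V}(t_0^-))|\le\alpha$ on that interval. Hence $\varphi(t_0^-)$ exists and equals $\lambda(\mathcal{V}(t_0^-))$. The right limit at $t_0\in[0,a)$ is handled symmetrically using (ii) of Definition \ref{def22}. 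Thus $\varphi$ is regulated.

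\textbf{Part (ii).} For the inequality $\sup_t\varphi(t)\le\lambda(\mathcal{V})$, observe that the evaluation map $v\mapsto v(t)$ is $1$-Lipschitz from $\mathcal{R}(I,\mathbb{X})$ into $\mathbb{X}$. Property (viii) of the preceding lemma then gives $\varphi(t)\le\lambda(\mathcal{V})$ for every $t$.

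The reverse inequality is the main step. Let $s=\sup_t\varphi(t)$ and $\varepsilon>0$. We build a finite partition adapted to the equiregularity. Using Definition \ref{def22} at each $t_0\in I$ with $\alpha=\varepsilon$, we obtain open intervals $(t_0-\mu_{t_0},t_0+\mu_{t_0})$. By compactness, finitely many of them, centered at points $\tau_1<\dots<\tau_m$, cover $I$. Refine this to a finite family $F$ of points, namely the centers $\tau_j$ together with intermediate points chosen in the overlaps of consecutive intervals, arranged so that every $t\in I$ is one of the points of $F$ or lies in an open interval $(\tau_j,\tau_j+\mu)$ or $(\tau_j-\mu,\tau_j)$. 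On such an open interval each $v(t)$ is $\varepsilon$-close to $v(\tau_j^\pm)$. We then work with the finite collection of sets $\mathcal{V}(\tau_j)$, $\mathcal{V}(\tau_j^+)$, $\mathcal{V}(\tau_j^-)$. By part (i) and its proof, each of these has $\lambda$ at most $s$. Cover each of them, in $\mathbb{X}$, by finitely many balls of radius $<s+\varepsilon$; this gives finitely many centers $c_k$ for each of the $3m$ sets.

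Next, define finitely many piecewise-constant regulated functions $w$ on $I$ by assigning to each such set one of its centers, for every possible combination of choices. For $v\in\mathcal{V}$, pick the combination in which each chosen center is within $s+\varepsilon$ of the corresponding value $v(\tau_j)$ or $v(\tau_j^\pm)$. Then at every $t$, $\|v(t)-w(t)\|<s+2\varepsilon$. Hence $\mathcal{V}$ is covered by finitely many $\|\cdot\|_\infty$-balls of radius $s+2\varepsilon$, which gives $\lambda(\mathcal{V})\le s+2\varepsilon$. Letting $\varepsilon\to0$ yields equality.

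The delicate part is arranging the partition so that each open subinterval is controlled by a single one-sided limit. This is why the two-sided neighbourhoods from Definition \ref{def22} are split at their centers, and why the values at the finitely many partition points are covered separately. This splitting is what makes the covering functions $w$ step functions, and therefore elements of $\mathcal{R}(I,\mathbb{X})$.
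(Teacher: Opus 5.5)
Your proof is correct. The paper gives no proof of this lemma; it only cites the literature. Your argument is the standard one for it. For (i) you bound $|\lambda(\mathcal{V}(t'))-\lambda(\mathcal{V}(t_0^{\pm}))|$ via equiregularity. For (ii) you take a finite partition adapted to equiregularity and build a finite net of step functions. Your partition step essentially reproves by hand Fra\v{n}kov\'a's characterization of equiregular sets: for every $\varepsilon>0$ there is a division $0=t_0<\dots<t_k=a$ with $\|v(t)-v(s)\|<\varepsilon$ for all $v\in\mathcal{V}$ whenever $s,t$ lie in the same open subinterval $(t_{i-1},t_i)$.

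Two points should be tightened.

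First, the correspondence $v(t')\mapsto v(t_0^-)$ need not be a well-defined map on $\mathcal{V}(t')$. Two functions can agree at $t'$ but have different left limits. So state your estimate as follows: $|\lambda(A)-\lambda(B)|\le\varepsilon$ whenever every point of each set lies within $\varepsilon$ of some point of the other. Your radius-enlarging proof gives exactly this.

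Second, the property you state for $F$ is automatic and is not what you use. That property is that every $t$ lies in $F$ or in some one-sided neighbourhood. What you need is that each open gap of $F$ lies inside a \emph{single} one-sided neighbourhood. With an arbitrary finite subcover, write $J_j=(\tau_j-\mu_j,\tau_j+\mu_j)$. The right half of $J_j$ and the left half of $J_{j+1}$ need not cover $(\tau_j,\tau_{j+1})$, so ``overlaps of consecutive intervals'' can fail. Pass to a minimal subcover and it works. Suppose a point of the gap is missed by both halves; any interval centred outside $[\tau_j,\tau_{j+1}]$ that reaches it must contain $J_j$ or $J_{j+1}$, which contradicts minimality. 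The same argument shows $[0,\tau_1)$ and $(\tau_m,a]$ are covered by the halves at $\tau_1$ and $\tau_m$.

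A slightly leaner variant samples each open subinterval at an interior point instead of at a one-sided limit. Then all the finitely many sets you cover are of the form $\mathcal{V}(t)$, and (ii) no longer depends on (i).
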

Let $\mathcal{LS}_{h}(I;\mathbb{X})$ be the space of functions $f:I\to \mathbb{X}$ that are Lebesgue-Stieltjes integrable with respect to $h$. Let $\mu_h$ be the Lebesgue-Stieltjes measure on $I$ induced by $h$. Using this measure, we have the following lemma.
\begin{lem}\cite[Corollary 3.1]{heinz1983behaviour}\label{Lemme 2.13}
	Let $\mathcal{V}_0\subset\mathcal{LS}_{h}(I;\mathbb{X})$ be a countable set. Assume that there exists a positive function $p\in\mathcal{LS}_{h}(I;\mathbb{R}^+)$ such that $\|v(t)\|\leq p(t),\ \mu_{h}$ - a.e. holds for all $v\in\mathcal{V}_0$. Then we have
	$$
	\lambda\Big(\int_0^a\mathcal{V}_0(t)\mathrm{d}h(t)\Big)\le 2\int_0^a\lambda(\mathcal{V}_0(t))\mathrm{d}h(t).
	$$
\end{lem}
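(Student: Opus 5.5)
The plan is to reduce the inequality to a statement about a finite measure space and then to use Heinz's original argument for Lebesgue measure, of which \cite[Corollary 3.1]{heinz1983behaviour} is the model. Since $h$ is nondecreasing, $\mu_h$ is a finite positive Borel measure on $I$, and $\int_0^a f\,\mathrm{d}h$ is the Bochner integral of $f$ with respect to $\mu_h$. The argument has three steps.

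\textbf{Step 1: reduce to a separable space.} Write $\mathcal{V}_0=\{v_n:n\in\mathbb{N}\}$. Each $v_n$ is $\mu_h$-measurable, so it is $\mu_h$-essentially separably valued. Discarding a $\mu_h$-null set, which changes neither the integrals nor the right-hand side, I take a closed separable subspace $\mathbb{X}_0\subset\mathbb{X}$ containing all values $v_n(t)$ and all the integrals $\int_0^a v_n\,\mathrm{d}h$.

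For a bounded $D\subset\mathbb{X}_0$ I use the elementary comparison
\begin{displaymath}
\lambda_{\mathbb{X}}(D)\le\lambda_{\mathbb{X}_0}(D)\le 2\lambda_{\mathbb{X}}(D).
\end{displaymath}
The second inequality holds because a ball of $\mathbb{X}$ meeting $D$ lies in a ball of $\mathbb{X}_0$ with twice the radius. This is where the factor $2$ will come from. It therefore suffices to prove the constant-one inequality
\begin{displaymath}
\lambda_{\mathbb{X}_0}\Big(\int_0^a\mathcal{V}_0\,\mathrm{d}h\Big)\le\int_0^a\lambda_{\mathbb{X}}(\mathcal{V}_0(t))\,\mathrm{d}h(t),
\end{displaymath}
with the measure on the right computed in $\mathbb{X}$, or equivalently up to that factor.

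\textbf{Step 2: measurability of the integrand.} Let $\{y_j\}$ be dense in $\mathbb{X}_0$. For the countable set $\mathcal{V}_0(t)$ I represent its measure of noncompactness as
\begin{displaymath}
\varphi(t)=\inf_{J\subset\mathbb{N}\text{ finite}}\ \sup_n\ \min_{j\in J}\|v_n(t)-y_j\|.
\end{displaymath}
Here the radii are perturbed slightly, and the density of $\{y_j\}$ lets me restrict to centres in the countable dense set. This is a countable infimum of countable suprema of measurable functions, so $\varphi$ is $\mu_h$-measurable. Moreover $0\le\varphi\le p$, so $\varphi$ is integrable.

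\textbf{Step 3: the estimate itself.} This step is the main obstacle. I fix $\varepsilon>0$ and, using that $\varphi$ is measurable and $p$ is integrable, partition $I$ into finitely many measurable sets $E_1,\dots,E_k$ and a remainder $E_0$ with the following properties. On $E_0$ we have $\int_{E_0}p\,\mathrm{d}h<\varepsilon$. On each $E_i$ a single finite set $J_i$ satisfies
\begin{displaymath}
\sup_n\min_{j\in J_i}\|v_n(t)-y_j\|\le\varphi(t)+\varepsilon
\end{displaymath}
and $\varphi$ oscillates by less than $\varepsilon$. Such a partition is obtained by an Egorov-type selection over the countable family of $J$'s.

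Then for every $n$ I split
\begin{displaymath}
\int_0^a v_n\,\mathrm{d}h=\sum_{i\ge1}\int_{E_i}v_n\,\mathrm{d}h+\int_{E_0}v_n\,\mathrm{d}h.
\end{displaymath}
The last term has norm less than $\varepsilon$. For $i\ge1$, the integral $\int_{E_i}v_n\,\mathrm{d}h$ lies in $\mu_h(E_i)\,\overline{co}\big(\bigcup_{t\in E_i}\{v_n(t)\}\big)$. That set is within $\mu_h(E_i)(\sup_{E_i}\varphi+\varepsilon)$ of the compact set $\mu_h(E_i)\,co\{y_j:j\in J_i\}$.

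Summing over $i$, the whole set of integrals lies within $\int_0^a\varphi\,\mathrm{d}h+C\varepsilon$ of a compact set. Using the properties of $\lambda$ listed in the preceding lemma (subadditivity, convex hull invariance, and $\lambda$ of a compact set being $0$), this gives the constant-one bound up to $C\varepsilon$. Letting $\varepsilon\to0$ and combining with the factor $2$ from Step 1 yields the claim.
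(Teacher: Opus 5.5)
Your core estimate is correct, and it follows the standard route to the cited result; the paper gives no proof of its own and only refers to Heinz. That route passes to a separable $\mathbb{X}_0$, proves a constant-one bound there, and pays the factor $2$ through $\lambda_{\mathbb{X}_0}\le 2\lambda_{\mathbb{X}}$ on subsets of $\mathbb{X}_0$. Step 3 goes through as you describe. The partition exists: for each finite $J$, the set where $J$ is $\varepsilon$-optimal is measurable, and these sets cover $I$ up to a null set. After disjointifying and splitting by level sets of $\varphi$, you can put a tail of small $\int p\,\mathrm{d}h$ into $E_0$.

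Be precise about what Steps 2--3 give. Your centres $y_j$ lie in $\mathbb{X}_0$, so $\varphi(t)=\lambda_{\mathbb{X}_0}(\mathcal{V}_0(t))$, and you obtain
\[
\lambda_{\mathbb{X}}\Big(\int_0^a\mathcal{V}_0\,\mathrm{d}h\Big)\le\lambda_{\mathbb{X}_0}\Big(\int_0^a\mathcal{V}_0\,\mathrm{d}h\Big)\le\int_0^a\lambda_{\mathbb{X}_0}(\mathcal{V}_0(t))\,\mathrm{d}h(t).
\]
The reduction displayed in Step 1 has $\lambda_{\mathbb{X}}$ on the right. That would already be the lemma with constant $1$; it is neither what you need nor what you prove, and should be replaced by this chain.

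There is a genuine gap in the remaining step, $\int_0^a\lambda_{\mathbb{X}_0}(\mathcal{V}_0(t))\,\mathrm{d}h\le 2\int_0^a\lambda_{\mathbb{X}}(\mathcal{V}_0(t))\,\mathrm{d}h$. Step 2 proves measurability of $\varphi$, not of the integrand $t\mapsto\lambda_{\mathbb{X}}(\mathcal{V}_0(t))$ that appears in the statement. For nonseparable $\mathbb{X}$ that function need not be measurable, so no argument can supply this.

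Here is a counterexample.
\begin{itemize}
\item Let $T$ be the countable set of finite $0$--$1$ strings. For $t\in[0,1)$, let $S_t\subset T$ be the set of initial segments of the binary expansion of $t$ (the one not ending in $1$'s), so $S_s\cap S_t$ is finite for $s\neq t$.
\item Fix a non-measurable $A\subset[0,1)$, and let $\mathbb{X}$ be the closed linear span in $\ell^\infty(T)$ of $c_0(T)\cup\{\mathbf{1}_{S_s}:s\in A\}$.
\item Take $h(t)=t$, $p\equiv 1$, and $v_n(t)=\mathbf{1}_{\{\sigma\in S_t:\,|\sigma|\le n\}}$. These are simple functions, constant on dyadic intervals of length $2^{-n}$.
\item For $t\in A$, the centre $\tfrac12\mathbf{1}_{S_t}\in\mathbb{X}$ gives $\lambda_{\mathbb{X}}(\mathcal{V}_0(t))\le\tfrac12$.
\item For $t\notin A$, every $y\in\mathbb{X}$ tends to $0$ along $S_t$, while $v_n(t)$ equals $1$ at the node of length $n$ in $S_t$. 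This forces $\lambda_{\mathbb{X}}(\mathcal{V}_0(t))=1$.
\end{itemize}
Hence $\{t:\lambda_{\mathbb{X}}(\mathcal{V}_0(t))<\tfrac34\}=A$, which is not measurable.

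So the factor-$2$ step is valid only with the right-hand side read as an upper integral. That is, $\lambda_{\mathbb{X}}(\int_0^a\mathcal{V}_0\,\mathrm{d}h)\le 2\int_0^a m\,\mathrm{d}h$ for every $\mu_h$-integrable $m\ge\lambda_{\mathbb{X}}(\mathcal{V}_0(\cdot))$. Your chain proves exactly this, via the pointwise bound $\varphi\le 2\lambda_{\mathbb{X}}(\mathcal{V}_0(\cdot))\le 2m$. If $\mathbb{X}$ is separable, take $\mathbb{X}_0=\mathbb{X}$: you then get measurability of the actual integrand and even constant $1$.

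State the conclusion in this form. It is also all the paper uses: in Step 5 of the main proof, the integrand is dominated by the measurable function $L_1U(s)\lambda(D(s))$, with $\lambda(D(\cdot))$ regulated.
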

\begin{definition}
	The function $\zeta\in \mathcal{R}(I,\mathbb{X})$ is said to be a mild solution of the system (\ref{sys}) on $I$ if it satisfies the following measure integral equation: 
	$$
	\zeta(t)=\W(t,0)(\zeta_0-g(\zeta))+\dint_0^t\, \W(t,s)\mathcal{V} u(s)\mathrm{d}s+\dint_0^t\, \W(t,s)\delta(s,\zeta(s))\mathrm{d}h(s),\;\;\; t\in I.
	$$
\end{definition}

\begin{definition}
    The nonlocal system \eqref{sys} is said to be controllable on the interval $I$, if for every $\zeta_0, \zeta_1\in\mathbb{X}$, there is a control function $u\in L^2(I; \mathbb{K})$ such that the mild solution $\zeta(\cdot)$ of \eqref{sys} satisfies $\zeta(a) + g(\zeta)=\zeta_1$.
\end{definition}
Our proof of controllability results heavily relies on the following fixed point theorem, a nonlinear alternative of M\"onch type.
\begin{theorem}\label{Monch}\cite{monch1980boundary}
	Let $\mathbb{X}$ be a Banach space, $\mathbb{W}$ an open subset of $\mathbb{X}$, $0\in \mathbb{W}$, and $\psi:\overline{\mathbb{W}}\to\mathbb{X}$ continuous. Also, assume that $\psi$ satisfies \begin{enumerate}
		\item[(i)] the M\"onch's conditions, $$\mathcal{N}\subset\overline{\mathbb{W}}\, \text{is countable}, \mathcal{N}\subset\overline{co}(\{0\}\cup\psi(\mathcal{N}))\Longrightarrow\, \mathcal{N}\, \text{is relatively compact and},$$ 
		\item[(ii)] the boundary condition  $$\zeta\in\overline{\mathbb{W}},\  \beta\in (0,1), \zeta=\beta \psi(\zeta)\Longrightarrow\ \zeta\notin \partial \mathbb{W}.$$
	\end{enumerate} 
	Then $\psi$ has a fixed point in $\overline{\mathbb{W}}$.
\end{theorem}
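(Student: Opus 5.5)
The plan is to reduce Theorem \ref{Monch} to Mönch's fixed point theorem on a closed convex set. That theorem says: if $D$ is closed and convex, $x_0\in D$, $F:D\to D$ is continuous, and every countable $C\subset D$ with $C\subset\overline{co}(\{x_0\}\cup F(C))$ is relatively compact, then $F$ has a fixed point. The reduction uses a Urysohn cut-off of $\psi$.

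\emph{Step 1: the fixed-point set of the homotopy.} Put
$$\Sigma=\{\zeta\in\overline{\mathbb{W}}:\ \zeta=\beta\psi(\zeta)\ \text{for some}\ \beta\in[0,1]\}.$$
Since $0\in\Sigma$ (take $\beta=0$), the set is nonempty. It is closed: if $\zeta_n=\beta_n\psi(\zeta_n)\to\zeta$, pass to a subsequence with $\beta_n\to\beta$; continuity of $\psi$ then gives $\zeta=\beta\psi(\zeta)$. We may assume $\psi$ has no fixed point on $\partial\mathbb{W}$, since otherwise we are done. Condition (ii) excludes $\beta\in(0,1)$ on the boundary, and $\beta=0$ gives $\zeta=0\in\mathbb{W}$. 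Hence $\Sigma\cap\partial\mathbb{W}=\emptyset$. By Urysohn's lemma in the metric space $\mathbb{X}$ there is a continuous $\varphi:\mathbb{X}\to[0,1]$ with $\varphi\equiv1$ on $\Sigma$ and $\varphi\equiv0$ on $\partial\mathbb{W}$. Define $\psi^*(\zeta)=\varphi(\zeta)\psi(\zeta)$ for $\zeta\in\overline{\mathbb{W}}$ and $\psi^*(\zeta)=0$ otherwise. This map is continuous on all of $\mathbb{X}$ because $\varphi$ vanishes on $\partial\mathbb{W}$.

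\emph{Step 2: Mönch's condition for $\psi^*$ with $x_0=0$.} Let $C\subset\mathbb{X}$ be countable with $C\subset\overline{co}(\{0\}\cup\psi^*(C))$, and set $C'=C\cap\overline{\mathbb{W}}$. Since $\varphi\in[0,1]$, each $\varphi(\zeta)\psi(\zeta)$ lies on the segment $[0,\psi(\zeta)]$. Therefore $\psi^*(C)\subset co(\{0\}\cup\psi(C'))$, and so $C'\subset C\subset\overline{co}(\{0\}\cup\psi(C'))$. Hypothesis (i) then shows $C'$ is relatively compact. Continuity of $\psi$ makes $\psi(C')$ relatively compact, so its closed convex hull with $0$ is compact (Mazur). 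It follows that $C$ is relatively compact.

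\emph{Step 3: Mönch's theorem itself on $D=\mathbb{X}$, which I expect to be the main obstacle.} I would prove it directly from Schauder's theorem. Construct an increasing sequence of countable sets: $C_0=\{0\}$, and $C_{n+1}$ is a countable set with $C_n\subset C_{n+1}$ and $\overline{C_{n+1}}=\overline{co}(\{0\}\cup\psi^*(C_n))$. Such a set exists because separable sets have countable dense subsets. Then $C=\bigcup_n C_n$ is countable. Using continuity of $\psi^*$ and monotonicity, $\overline{C}=\overline{co}(\{0\}\cup\psi^*(C))$, so Step 2 gives that $K:=\overline{C}$ is compact and convex. Moreover $\psi^*(K)\subset\overline{\psi^*(C)}\subset K$ by continuity. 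Schauder's theorem then yields $\zeta\in K$ with $\psi^*(\zeta)=\zeta$. The delicate point in this step is ensuring that $\overline{C}$ is exactly the closed convex hull, so that the countable condition applies.

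\emph{Step 4: conclusion.} If $\zeta\notin\overline{\mathbb{W}}$, then $\zeta=\psi^*(\zeta)=0\in\mathbb{W}$, which is a contradiction. Hence $\zeta\in\overline{\mathbb{W}}$ and $\zeta=\varphi(\zeta)\psi(\zeta)$ with $\varphi(\zeta)\in[0,1]$. This means $\zeta\in\Sigma$, so $\varphi(\zeta)=1$ and $\zeta=\psi(\zeta)$ is the required fixed point in $\overline{\mathbb{W}}$.
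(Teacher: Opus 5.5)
Your proof is correct. The paper gives no proof of Theorem~\ref{Monch}; it simply imports the result from M\"onch's 1980 paper, and your argument is essentially M\"onch's own proof.

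Your argument has two layers. First, a Urysohn cut-off $\varphi$, equal to $1$ on $\Sigma=\{\zeta\in\overline{\mathbb{W}}:\zeta=\beta\psi(\zeta),\ \beta\in[0,1]\}$ and to $0$ on $\partial\mathbb{W}$, reduces the Leray--Schauder-type alternative to M\"onch's fixed point theorem on the closed convex set $\mathbb{X}$ with base point $0$. Second, that theorem is obtained from Schauder's theorem through a countable iteration.

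The individual checks all go through:
\begin{itemize}
\item $\Sigma$ is closed, by compactness of $[0,1]$, continuity of $\psi$ and closedness of $\overline{\mathbb{W}}$.
\item $\psi^*$ is continuous, by pasting its restrictions to the closed sets $\overline{\mathbb{W}}$ and $\mathbb{X}\setminus\mathbb{W}$; it vanishes identically on the latter.
\item Step~2 correctly combines the inclusion $\psi^*(C)\subset co(\{0\}\cup\psi(C'))$, hypothesis (i) applied to the countable set $C'=C\cap\overline{\mathbb{W}}$, and Mazur's theorem.
\end{itemize}

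The point you flag as delicate in Step~3 is harmless.
\begin{itemize}
\item By induction, $C_n\subset\overline{C_n}\subset\overline{co}(\{0\}\cup\psi^*(C_n))$, because $\overline{C_n}=\overline{co}(\{0\}\cup\psi^*(C_{n-1}))$ and $C_{n-1}\subset C_n$.
\item So you may take $C_{n+1}$ to be $C_n$ together with a countable dense subset of this hull. The hull is separable because rational convex combinations are dense.
\item The identity $\overline{C}=\overline{co}(\{0\}\cup\psi^*(C))$ uses only monotonicity and the convexity of the increasing union $\bigcup_n\overline{C_n}$.
\item Continuity of $\psi^*$ is not needed for that identity. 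It is what gives $\psi^*(K)\subset\overline{\psi^*(C)}\subset K$.
\end{itemize}

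The only case left implicit is $\partial\mathbb{W}=\emptyset$. There $\mathbb{W}=\mathbb{X}$ by connectedness, and $\varphi\equiv 1$ does the job.

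Compared with the paper's bare citation, your version is self-contained, resting only on Urysohn's lemma, Mazur's theorem and Schauder's theorem.
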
 

\section{Main results}\label{sect3}
The prime aim of this section is to establish the controllability of the system (\ref{sys}) by employing the M\"onch fixed point theorem.
The following assumptions are required to build our main results.
\begin{enumerate}
	\item[$\textbf{(H1)}$]\, The resolvent operator $(t,s)\mapsto \W(t,s)$ of the linear part of (\ref{sys}) is norm-continuous in $\mathbb{X}$ for $0\leq s\leq t\leq a$ and $\sup\{\lVert \W(t,s)\rVert:\;0\leq s\leq t\leq a\}\leq L_1$
	\item[$\textbf{(H2)}$]\, The function $\delta:I\times \mathbb{X}\rightarrow \mathbb{X}$ satisfies:
	\begin{enumerate}
		\item[$(i)$]\, for a.e. $t\in I$ with respect to $\mu_h$, the function $\delta(t,\cdot):\mathbb{X}\rightarrow \mathbb{X}$ is continuous and for all $\zeta\in \mathbb{X}$, the function $\delta(.,\zeta):I\rightarrow \mathbb{X}$ is $\mu_h$-measurable;
		\item[$(ii)$]\, there exist a function $n\in \mathcal{LS}_h(I;\mathbb{R}^+)$ and a non-decreasing continuous function $\omega:\mathbb{R}^+\rightarrow\mathbb{R}^+$ such that 
		$\lVert\delta(t,\zeta)\rVert\leq n(t)\omega(\lVert \zeta\rVert)$ for all $\zeta\in\mathbb{X}$ and almost all $t\in I$ and $\displaystyle\lim_{k\rightarrow +\infty} \inf \frac{\omega(k)}{k}=\gamma<+\infty;$
		\item[$(iii)$]\, there exists a function $U\in \mathcal{LS}_h(I;\mathbb{R}^+)$ such that $\lambda(\delta(t,D))\leq U(t)\lambda(D)$ for almost all $t\in I$ and every bounded subset $D\subset \mathbb{X}$.
	\end{enumerate}
	\item[$\textbf{(H3)}$] $g:\mathcal{R}(I,\mathbb{X})\rightarrow \mathbb{X}$ is continuous and compact. And there exist the positive constants $c$ and $d$ such that $\lVert g(\zeta)\rVert\leq c\lVert \zeta\rVert_{\infty} + d$, \;\; for all $\zeta\in \mathcal{R}(I, \mathbb{X})$.
	\item[$\textbf{(H4)}$] The linear operator $Z:L^2(I,\mathbb{K})\rightarrow\mathbb{X}$ is defined by $$Zu=\displaystyle\int_0^a\,\W(a,s)\mathcal{V}u(s)\mathrm{d}s$$ such that
	\begin{enumerate}
		\item[$(i)$] $Z$ has an invertible operator $Z^{-1}$ which takes value in $L^2(I;\mathbb{K})/\ker(Z)$ and there exist positive constants $L_2$, $L_3$ such that
		$$\lVert\mathcal{V}\rVert\leq L_2,\;\;\;\lVert Z^{-1}\rVert\leq L_3 ;$$
		\item[$(ii)$] there exists $P_Z\in L^1(I;\mathbb{R^+})$ such that, for almost all $t\in I$ and any bounded set $Q\subset \mathbb{X}$,
		$$\lambda((Z^{-1}Q)(t))\leq P_Z(t)\lambda(Q). $$
	\end{enumerate}
\end{enumerate}
\begin{theorem}\label{main}
	Assume that hypotheses (\textbf{R}), \textbf{(E1)-(E2)}, and \textbf{(H1)-(H4)} are satisfied. Then the measure driven system (\ref{sys}) is controllable on $I$ provided that 
	\begin{equation}\label{cond1}
	c \big[ L_1+L_1L_2L_3a^{\frac{1}{2}}(1+L_1)\big]+L_1\gamma\big[1+L_1L_2L_3a^{\frac{1}{2}}\big]\int_0^a\,n(s)\mathrm{d}h(s)\leq 1.
	\end{equation}
	\begin{equation}\label{cond2}     \widetilde{L}:=\bigg(2L_1+4L_1^2L_2\int_0^a\,P_Z(s)\mathrm{d}s\bigg)\int_0^a\,L(s)\mathrm{d}h(s)<1.
	\end{equation}
\end{theorem}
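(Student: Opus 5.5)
The proof will go through the Mönch alternative, Theorem \ref{Monch}. Given $\zeta_0,\zeta_1\in\mathbb{X}$ and $\zeta\in\mathcal{R}(I,\mathbb{X})$, define the control
\[
u_\zeta(t)=Z^{-1}\Big[\zeta_1-g(\zeta)-\W(a,0)(\zeta_0-g(\zeta))-\int_0^a\W(a,s)\delta(s,\zeta(s))\mathrm{d}h(s)\Big](t),
\]
and the operator $\psi:\mathcal{R}(I,\mathbb{X})\to\mathcal{R}(I,\mathbb{X})$ by
\[
(\psi\zeta)(t)=\W(t,0)(\zeta_0-g(\zeta))+\int_0^t\W(t,s)\mathcal{V}u_\zeta(s)\mathrm{d}s+\int_0^t\W(t,s)\delta(s,\zeta(s))\mathrm{d}h(s).
\]
By \textbf{(H4)}(i), a fixed point of $\psi$ is a mild solution with $\zeta(a)+g(\zeta)=\zeta_1$. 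So controllability reduces to finding a fixed point. We take $\mathbb{W}=\{\zeta:\|\zeta\|_\infty<r\}$ for a suitable $r$ and verify the following in order.

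\textbf{Step 1: $\psi$ is well defined and continuous.} Regulatedness of $\psi\zeta$ follows from Lemma \ref{lemme1}, combined with norm continuity of $\W$ from \textbf{(H1)} for the term $\W(t,\cdot)$. Continuity of $\psi$ follows from continuity of $g$ in \textbf{(H3)} and from dominated convergence for the Lebesgue–Stieltjes integral, using \textbf{(H2)}(i)(ii). The bound $\|u_\zeta\|_{L^2}\le L_3(\cdots)$ makes the control term continuous in $\zeta$ as well, since Hölder gives $\int_0^t\|\mathcal{V}u\|\le L_2a^{1/2}\|u\|_{L^2}$.

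\textbf{Step 2: the boundary condition.} Suppose $\zeta=\beta\psi(\zeta)$ with $\|\zeta\|_\infty=r$. Estimating gives
\[
r\le L_1(\|\zeta_0\|+cr+d)+L_1L_2L_3a^{1/2}\big[\|\zeta_1\|+(1+L_1)(cr+d)+L_1\|\zeta_0\|+L_1\omega(r)\textstyle\int_0^a n\,\mathrm{d}h\big]+L_1\omega(r)\int_0^a n\,\mathrm{d}h .
\]
Divide by $r$ and pass to the limit along a sequence $r_k\to\infty$ realizing the $\liminf$ $\gamma$ of $\omega(k)/k$. This yields $1\le$ the left side of \eqref{cond1}. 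One then argues that some large $r$ makes the inequality fail, so $\zeta\notin\partial\mathbb{W}$. Here one must handle the equality case in \eqref{cond1}, presumably by exploiting the constant terms or by treating it as strict.

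\textbf{Step 3: Mönch's condition.} This is the main obstacle. Let $\mathcal{N}\subset\overline{\mathbb{W}}$ be countable with $\mathcal{N}\subset\overline{co}(\{0\}\cup\psi(\mathcal{N}))$.

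First, show that $\psi(\mathcal{N})$ is equiregulated. For the $\mathrm{d}h$-integral, split $\int_0^{t'}-\int_0^{t_0^\pm}$ into a small-interval piece, controlled by $\int n\,\mathrm{d}h$ over shrinking intervals, plus $\int(\W(t',s)-\W(t_0,s))\cdots$, controlled by norm continuity from \textbf{(H1)}. The $g$-term is handled by strong continuity of $\W(\cdot,0)$ together with compactness of $g(\mathcal{N})$, via Lemma \ref{uni}. Lemma \ref{lemme 2.10} then makes $\mathcal{N}$ bounded and equiregulated, so $\lambda(\mathcal{N})=\sup_t\lambda(\mathcal{N}(t))$.

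Next, estimate pointwise using Lemma \ref{Lemme 2.13} and \textbf{(H2)}(iii), writing $L=U$. The $g$ terms vanish because $g$ is compact. The integral term gives at most $2L_1\int_0^aL\,\mathrm{d}h\,\lambda(\mathcal{N})$. The control term, through \textbf{(H4)}(ii) and another application of Lemma \ref{Lemme 2.13}, gives at most $2L_1L_2\int_0^aP_Z\,\mathrm{d}s\cdot 2L_1\int_0^aL\,\mathrm{d}h\,\lambda(\mathcal{N})$.

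This yields $\lambda(\mathcal{N})\le\widetilde L\lambda(\mathcal{N})$, and \eqref{cond2} forces $\lambda(\mathcal{N})=0$. Theorem \ref{Monch} then gives the fixed point.

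The delicate points are justifying the countable-set measure estimates through the nested integral in $u_\zeta$, and obtaining equiregularity at jump points of $h$. For the latter I would use Lemma \ref{lemme1}: the jump of the integral equals $\W(t_0,t_0)\delta(t_0,\zeta(t_0))\mathbb{V}^\pm h(t_0)$, and it must be matched correctly with the one-sided limits.
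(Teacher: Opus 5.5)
Your route is the paper's own, essentially step for step. You use the same control $u_\zeta$ and operator $\psi$, the same growth estimate closed with $\liminf_{k\to\infty}\omega(k)/k=\gamma$, the same splitting at $t_0-\theta$ for equiregularity, and the same Heinz-type estimate (Lemma~\ref{Lemme 2.13} with \textbf{(H4)}(ii), $L=U$) producing $\widetilde{L}$. Two variations are harmless: you check the boundary condition directly instead of through $\psi(\overline{B_r})\subseteq\overline{B_r}$, and you prove continuity of $\psi$ by a dominated-convergence bound uniform in $t$ (so, unlike the paper's Step~4, without equiregularity). However, the points you leave open are genuine gaps, and the paper's proof does not close them either.

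\textbf{Equality in \eqref{cond1}.} The constant terms cannot rescue this case, because they are nonnegative and sit on the large side. Write the estimate as $r\le C_0+Ar+B\omega(r)$ with $A+B\gamma=1$. The choice $\omega(k)=\gamma k+\sqrt{k}$ turns it into $r\le C_0+r+B\sqrt{r}$, which holds for every $r$. So \eqref{cond1} must be assumed strict. The paper's Step~1, which treats ``$\ge1$'' as contradicting ``$\le1$'', has the same defect.

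\textbf{Equiregularity at a jump of $h$.} This is the more serious gap: it cannot be obtained before any compactness is known, which is what your plan (and the paper's Step~3) attempts. Lemma~\ref{lemme1} concerns integrands independent of $t$. For $F_\zeta(t)=\int_0^t\W(t,s)\delta(s,\zeta(s))\,\mathrm{d}h(s)$ with $\mathbb{V}^+h(t_0)>0$ and $t\downarrow t_0$, the difference $F_\zeta(t)-F_\zeta(t_0^+)$ equals $(\W(t,t_0)-I)\delta(t_0,\zeta(t_0))\,\mathbb{V}^+h(t_0)$ plus terms that are small uniformly in $\zeta\in\overline{B_r}$. Norm continuity of $\W$ is available only off the diagonal; that is how the paper defines it and what Theorem~\ref{NCR} delivers. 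Hence this term is uniformly small over a family of $\zeta$ only if $\{\delta(t_0,\zeta(t_0))\}$ is relatively compact (Lemma~\ref{uni}). A concrete failure: take $\delta(t,x)=x$, $\W(t,s)=T(t-s)$ the Dirichlet heat semigroup (so $\|T(\varepsilon)-I\|=1$), and constant $\zeta\equiv x$ with $\|x\|\le r$. The supremum of this term is then $r\,\mathbb{V}^+h(t_0)$ for every $t>t_0$. The paper's bound for $M_4$ on $[t_0-\theta,t_0^+]$ silently ignores this atom at $s=t_0$.

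\textbf{Repair.} Reverse the order inside the M\"onch step.
Let $\mathcal{N}\subset\overline{co}(\{0\}\cup\psi(\mathcal{N}))$ be countable. The pointwise inclusion $\mathcal{N}(t)\subset\overline{co}(\{0\}\cup(\psi\mathcal{N})(t))$ and Lemma~\ref{Lemme 2.13} need no equiregularity. They give $\lambda(\mathcal{N}(t))\le\widetilde{L}\sup_{s\in I}\lambda(\mathcal{N}(s))$ for all $t$, hence $\lambda(\mathcal{N}(t))=0$ for every $t$.
\textbf{(H2)}(iii) must hold $\mu_h$-a.e. for the $\mathrm{d}h$-estimate to make sense, so it holds at every atom of $\mu_h$. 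Therefore $\delta(t_0,\mathcal{N}(t_0))$ is relatively compact, and Lemma~\ref{uni} disposes of the atom term.
Now $\psi(\mathcal{N})$ is equiregulated, hence so is $\mathcal{N}$ by Lemma~\ref{lemme 2.10}, and $\lambda(\mathcal{N})=\sup_t\lambda(\mathcal{N}(t))=0$.

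\textbf{Nested integral.} Your worry here is also justified. Lemma~\ref{Lemme 2.13} needs a common integrable majorant for $\{u_{\zeta_n}\}$, which the $L^2$ bound does not provide. Truncate $u_{\zeta_n}$ where $\|u_{\zeta_n}(s)\|>M$. This does not increase $\lambda(\{u_{\zeta_n}(s)\})$, and it moves the control integrals by at most $L_1L_2\sup_n\|u_{\zeta_n}\|_{L^2}^2/M$. Letting $M\to\infty$ recovers the estimate.
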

\begin{proof}
	By using the condition $\textbf{(H4)}$-(i), for arbitrary function $\zeta\in \mathcal{R}(I;\mathbb{X})$, we define the control
	$$
	u_\zeta(t)=Z^{-1}\Bigg[\zeta_1-g(\zeta)-\W(a,0)(\zeta_0-g(\zeta))-\int_0^a\,\W(a,s)\delta(s,\zeta(s))\mathrm{d}h(s)\bigg](t).
	$$
	Our goal is to show when using this control that the operator $\psi:\mathcal{R}(I,\mathbb{X})\rightarrow\mathcal{R}(I,\mathbb{X})$ defined by
	$$
	(\psi\zeta)(t)=\W(t,0)(\zeta_0-g(\zeta))+\int_0^t\,\W(t,s)\mathcal{V} u(s)\mathrm{d}s+\int_0^t\,\W(t,s)\delta(s,\zeta(s))\mathrm{d}h(s)
	$$ has a fixed point $\zeta(\cdot)$. This fixed point is then a mild solution of the measure system (\ref{sys}). It is easy to verify that $\zeta(a)=(\psi\zeta)(a)=\zeta_1-g(\zeta)$
	which implies that the measure system (\ref{sys}) is controllable.
	
	As a result of the assumptions $\textbf{(H1)}$ and $\textbf{(H2)}$ the integral in the above formula is well defined. Next, we will prove that $\psi$ has a fixed point by using the M\"onch's fixed point theorem.
	Let $r>0$ and $B_r=\{\zeta\in \mathcal{R}(I,\mathbb{X}): \lVert\zeta\rVert_\infty<r\}$. For every positive number $r$, $B_r$ is clearly an open subset in $\mathcal{R}(I,\mathbb{X})$ and $0\in B_r$. Write $\overline{B_r}=\{\zeta\in \mathcal{R}(I,\mathbb{X}): \lVert\zeta\rVert_\infty\leq r\}$\, and $\psi(\overline{B_r})=\{\psi(\zeta):\;\;\zeta(\cdot)\in \overline{B_r}\}$. The remaining part is listed as:\\
	\textbf{Step 1}: There exists a positive number $r$ such that $\psi(\overline{B_r})\subseteq \overline{B_r}$.\\
	Suppose the contrary, then for all $r>0$ there is  $\zeta_r(\cdot)\in \overline{B_r}$ such that $\psi(\zeta_r)(\cdot)\notin\overline{B_r}$. Hence $\lVert\psi(\zeta_r)(t)\rVert>r$ for some $t\in I$. Using the hypotheses $(H1)-(H4)$, we have
	\begin{align*}
	r&<\lVert\psi(\zeta_r)(t)\rVert=\lVert \W(t,0)(\zeta_0-g(\zeta_r)) + \int_0^t\,\W(t,s)\mathcal{V}u(s)\mathrm{d}s+\int_0^t \W(t,s)\delta(s,\zeta_r(s))\mathrm{d}h(s)\rVert\\
	&\leq L_1(\lVert\zeta_0\rVert+\lVert g(\zeta_r)\rVert)+\int_0^t\,\lVert \W(t,s)\mathcal{V}u(s)\rVert\,\mathrm{d}s+\int_0^t\lVert \W(t,s)\delta(s,\zeta_r(s))\rVert\,\mathrm{d}h(s)\\ 
	&\leq L_1(\lVert\zeta_0\rVert+\lVert g(\zeta_r)\rVert)+L_1L_2\int_0^a\lVert u(s)\rVert \mathrm{d}s+L_1\int_0^a\lVert\delta(s,\zeta
	_r(s))\rVert\,\mathrm{d}h(s)\\
	&\leq L_1(\lVert\zeta_0\rVert+\lVert g(\zeta_r)\rVert)+L_1L_2a^{\frac{1}{2}}\lVert\, u_{\zeta_r}\rVert_{L^2}+L_1\omega(r)\int_0^a\,n(s)\mathrm{d}h(s).
	\end{align*}
	Note that 
	\begin{align}\label{estc}
	\lVert u_{\zeta_r}\rVert_{L^2}&=\bigg\lVert Z^{-1}\Bigg[\zeta_1-g(\zeta_r)-\W(a,0)(\zeta_0-g(\zeta_r))-\int_0^a\,\W(a,s)\delta(s,\zeta_r(s))\,\mathrm{d}h(s) \Bigg]\bigg\rVert_{L^2}\nonumber\\
	&\leq L_3\Bigg[\lVert\zeta_1\rVert+L_1\lVert\zeta_0\rVert+(1 + L_1)\lVert g(\zeta_r)\rVert+ L_1\omega(r)\int_0^a\,n(s)\mathrm{d}h(s) \Bigg],
	\end{align}
	from this inequality, we obtain that
	\begin{align*}
	r&<(L_1+L_1^2L_2L_3a^{\frac{1}{2}})\lVert\zeta_0\rVert+\Bigg[L_1+L_1L_2L_3a^{\frac{1}{2}}(1+L_1) \Bigg](cr+d)\\
	&+L_1L_2L_3a^{\frac{1}{2}}\lVert\zeta_1\rVert+L_1\omega(r)\Bigg[1+L_1L_2L_3a^{\frac{1}{2}}\Bigg]\int_0^a\,n(s)\mathrm{d}h(s).
	\end{align*}
	By dividing both sides by $r$ and taking the lower limit as $r\rightarrow +\infty$, we get
	$$
	c \big[ L_1+L_1L_2L_3a^{\frac{1}{2}}(1+L_1)\big]+L_1\gamma\big[1+L_1L_2L_3a^{\frac{1}{2}}\big]\int_0^a\,n(s)\mathrm{d}h(s)\geq 1. 
	$$
	It is a contradiction to $(\ref{cond1})$. Hence there is some positive number $r$ such that $\psi(\overline{B}_r)\subset\overline{B}_r$.
	
	\textbf{Step 2}:\,$\psi:\overline{B_r}\rightarrow\overline{B_r}$ satisfies the boundary condition given in point \textbf{(ii)} of Theorem \ref{Monch}.\\
	
	If $\zeta\in \overline{B_r}$, $\alpha\in (0,1)$ and $\zeta=\alpha\psi(\zeta)$, then by \textbf{Step 1}, we have
	$$
	\lVert\zeta\rVert=\lVert\alpha\psi(\zeta)\rVert<\lVert\psi(\zeta)\rVert\leq r.
	$$
	Therefore $\zeta\notin \partial B_r$.\\
	
	\textbf{Step 3:}
	$\psi(\overline{B_r})$ is equiregulated on $I$.\\
	For $t_0\in [0,a)$, we have
	\allowdisplaybreaks
	\begin{align*}
	\lVert\psi(\zeta)(t)-\psi(\zeta)(t_0^+)\rVert&=\bigg\lVert \W(t,0)(\zeta_0-g(\zeta))+\int_0^t\,\W(t,s)\mathcal{V} u_\zeta(s)\mathrm{d}s\\
	&\hspace{1cm}+\int_0^t\,\W(t,s)\delta(s,\zeta(s))\mathrm{d}h(s)-\W(t_0^+,0)(\zeta_0-g(\zeta))\\
	&\hspace{1cm}-\int_0^{t_{0^+}}\,\W(t_0^+,s)\mathcal{V} u_\zeta(s)\mathrm{d}s-\int_0^{t_{0^+}}\,\W(t_0^+,s)\delta(s,\zeta(s))\mathrm{d}h(s)\bigg\rVert\\
	& = \bigg\lVert\Big( \W(t,0)-\W(t_0^+,0)\Big)(\zeta_0-g(\zeta))+\int_0^{t_0^+}\W(t,s)\mathcal{V} u_\zeta(s)\mathrm{d}s\\
	&\hspace{1cm} + \int_{t_0^+}^t\W(t,s)\mathcal{V} u_\zeta(s)\mathrm{d}s + \int_0^{t_0^+}\,\W(t,s)\delta(s,\zeta(s))\mathrm{d}h(s)\\
	&\hspace{1cm} + \int_{t_0^+}^t\,\W(t,s)\delta(s,\zeta(s))\mathrm{d}h(s)-\int_0^{t_0^+}\W(t_0^+,s)\mathcal{V}u_\zeta(s)\mathrm{d}s\\
	&\hspace{1cm}-\int_0^{t_0^+}\,\W(t_0^+,s)\delta(s,\zeta(s))\mathrm{d}h(s)\bigg\rVert\\
	&\leq\bigg\lVert\Big(\W(t,0)-\W(t_0^+,0)\Big)\zeta_0\bigg\rVert + \bigg\lVert(\W(t,0)-\W(t_0^+,0)\Big)g(\zeta)\bigg\rVert\\
	& + \int_0^{t_0^+}\bigg\lVert\Big(\W(t,s)-\W(t_0^+,s)\Big)\mathcal{V}u_\zeta(s)\bigg\rVert\mathrm{d}s\\
	&+\int_0^{t_0^+}\bigg\lVert\Big(\W(t,s)-\W(t_0^+,s)\Big)\delta(s,\zeta(s))\bigg\rVert\mathrm{d}h(s)\\
	&+\int_{t_0^+}^t\bigg\lVert\W(t,s)\mathcal{V}u_\zeta
	(s)\bigg\rVert\mathrm{d}s\\
	&+\int_{t_0^+}^t\bigg\lVert \W(t,s)\delta(s,\zeta(s))\bigg\rVert\mathrm{d}h(s)\\
	&:= M_1+M_2+M_3+M_4+M_5 + M_6,
	\end{align*}
	where 
	\allowdisplaybreaks
	\begin{align*}
	&M_1 = \bigg\lVert\Big(\W(t,0)-\W(t_0^+,0)\Big)\zeta_0\bigg\rVert,\\
	&M_2 = \bigg\lVert(\W(t,0)-\W(t_0^+,0)\Big)g(\zeta)\bigg\rVert,\\
	&M_3 = \int_0^{t_0^+}\bigg\lVert\Big(\W(t,s)-\W(t_0^+,s)\Big)\mathcal{V}u_\zeta(s)\bigg\rVert\mathrm{d}s,\\
	&M_4 =  \int_0^{t_0^+}\bigg\lVert\Big(\W(t,s)-\W(t_0^+,s)\Big)\delta(s,\zeta(s))\bigg\rVert\mathrm{d}h(s),\\
	&M_5 = \int_{t_0^+}^t\bigg\lVert \W(t,s)\mathcal{V}u_\zeta
	(s)\bigg\rVert\mathrm{d}s,\\
	&M_6 = \int_{t_0^+}^t\bigg\lVert \W(t,s)\delta(s,\zeta(s))\bigg\rVert\mathrm{d}h(s).
	\end{align*}
	Therefore, we need to show that $M_i$ tends to $0$ independently of $\zeta\in\overline{B_r}$ when $t\rightarrow t_0$, $i=1,2,3,4,6$.\\
 Firstly for \textcolor{blue}{$t_0^+>0$}. By the norm-continuity of $\W(t,s)$, we have
	$$
	\lVert \W(t,0)\zeta_0-\W(t_0^+,0)\zeta_0\rVert\rightarrow 0\;\;\mbox{as}\;\;t\to t_0^+.
	$$
	Therefore, $M_1\rightarrow 0$.\\
	Moreover, we have that
	\begin{align*}
	\lVert \W(t,0)g(\zeta)-\W(t_0^+,0)g(\zeta)\rVert&\leq\lVert \W(t,0)-\W(t_0^+,0)\rVert\times\lVert g(\zeta)\rVert\\
	&\leq \lVert \W(t,0)-\W(t_0^+,0)\rVert\times \Big(c\lVert\zeta\rVert_{\infty}+d\Big)\\
	&\leq \lVert \W(t,0)-\W(t_0^+,0)\rVert\times \Big(cr+d\Big).
	\end{align*}
	And so, $\lVert \W(t,0)-\W(t_0^+,0)\rVert\rightarrow 0$ as $t\rightarrow t_0^+$  by the norm-continuity of $\W(\cdot,\cdot)$. Hence $M_2\rightarrow 0$ as $t\rightarrow t_0^+$.\\
	Now for $t_0^+ =0$,  we have that\\
	$\|\W(t,0)g(v) - g(v)\|\le  \sup_{\omega\in \overline{g(B_{r})}}\|\W(t,0)\omega - \omega\|\to 0, \mbox{as}\,\omega \to 0^+$, by Lemma \ref{uni} since $\overline{g({B_r})}$ is compact. Therefore, \,$I_2$ tends to $0$ as  $t\to t_0^+$.\\

	For $t_0^+=0$, $0<t\leq a$, it is easy to verify that $M_3=M_4=0$.\\
	For $0<t_0^+<a$ and arbitrary $0<\theta<t_0^+$, by the assumptions 
	$\textbf{(H1)}$ and $\textbf{(H4)(i)}$ and the arbitrariness of $\theta$, we get
	\begin{align*}
	M_3&\leq \displaystyle\int_0^{t_0^+-\theta}\bigg\lVert\Big(\W(t,s)-\W(t_0^+,s)\Big)\mathcal{V}u_\zeta\bigg\rVert\mathrm{d}s\\
	&+\int_{t_0^+-\theta}^{t_0^+}\bigg\lVert\Big(\W(t,s)-\W(t_0^+,s)\Big)\mathcal{V}u_\zeta\bigg\rVert\mathrm{d}s\\
	&\leq \displaystyle\sup_{s\in[0,t_0^+-\theta]}\bigg\lVert \W(t,s)-\W(t_0^+,s)\bigg\rVert_{\mathcal{L}(\mathbb{X})}\times L_2\int_0^{t_0^+-\theta}\lVert u_\zeta(s)\rVert\mathrm{d}s\\
	&+L_2\int_{t_0^+-\theta}^{t_0^+}\bigg\lVert\Big(\W(t,s)-\W(t_0^+,s)\Big)\bigg\rVert\lVert u_\zeta\rVert\mathrm{d}s\\
	&\rightarrow 0\; \mbox{as}\;t\rightarrow t_0^+\;\mbox{and}\;\theta\rightarrow 0.
	\end{align*}
	In the same way as previously, by using $\textbf{(H1)}$ and $\textbf{(H2)}$-(ii) we have
	\begin{align*}
	M_4&\leq \displaystyle\int_0^{t_0^+-\theta}\bigg\lVert\Big(\W(t,s)-\W(t_0^+,s)\Big)\delta(s,\zeta(s))\bigg\rVert\mathrm{d}h(s)\\
	&+\int_{t_0^+-\theta}^{t_0^+}\bigg\lVert\Big(\W(t,s)-\W(t_0^+,s)\Big)\delta(s,\zeta(s))\bigg\rVert\mathrm{d}h(s)\\
	&\leq \displaystyle\sup_{s\in[0,t_0^+-\theta]}\bigg\lVert \W(t,s)-\W(t_0^+,s)\bigg\rVert_{\mathcal{L}(\mathbb{X})}\int_0^{t_0^+-\theta}n(s)\omega(r)\mathrm{d}h(s)\\
	&+\int_{t_0^+-\theta}^{t_0^+}\bigg\lVert\Big(\W(t,s)-\W(t_0^+,s)\Big)n(s)\omega(r)\bigg\rVert\mathrm{d}h(s)\\
	&\leq \displaystyle\sup_{s\in[0,t_0^+-\theta]}\bigg\lVert \W(t,s)-\W(t_0^+,s)\bigg\rVert_{\mathcal{L}(\mathbb{X})}\omega(r)\int_0^{t_0^+-\theta}n(s)\mathrm{d}h(s)\\
	&+\omega(r)\int_{t_0^+-\theta}^{t_0^+}\bigg\lVert\Big(\W(t,s)-\W(t_0^+,s)\Big)n(s)\bigg\rVert\mathrm{d}h(s)\\
	&\rightarrow 0\;\mbox{as}\;t\rightarrow t_0^+\;\mbox{and}\;\theta\rightarrow 0.
	\end{align*}
	
	For $M_5$,  by $(H4)-(i)$  and (\ref{estc}), we get that 
	$M_5 = \dint_{t_0^+}^t\bigg\lVert \W(t,s)\mathcal{V}u_\zeta
	(s)\bigg\rVert\mathrm{d}s\;\;\; \rightarrow 0\;\;\;\mbox{as}\;\;\;t\rightarrow t_0^+.$\\
	
	Let $q(t)=\int_0^t\,n(s)\mathrm{d}h(s)$, from Lemma \ref{lemme1},\,$q(t)$ is a regulated function on $I$. Hence,
	$$
	M_6\leq L_1\omega(r)\int_{t_0^+}^t\,n(s)\,\mathrm{d}h(s)=L_1\omega(r)(q(t)-q(t_0^+))\rightarrow 0,\;\;\;\mbox{as}\;\;t\rightarrow t_0^+,
	$$
	also independently of $\zeta(\cdot)$.\\
	Therefore $\lVert\psi(\zeta)(t)-\psi(\zeta)(t_0^+)\rVert\rightarrow 0$ as $t\rightarrow t_0^+$.\\
	Using the similar method as previously, we show that $\lVert\psi(\zeta)(t_0^-)-\psi(\zeta)(t)\rVert\rightarrow 0$, as $t\rightarrow t_0^-$ for all $t_0\in(0,a]$. Hence, $\psi(\overline{B_r})$ is equiregulated on $I$.
	
	\textbf{Step 4 :} The operator $\psi$ is continuous on $\overline{B_r}$.\\
	Let $\{\zeta_n\}_{n=1}^\infty$ be a convergent sequence in $\overline{B_r}$ and $\zeta_n\rightarrow\zeta$ as $n\rightarrow\infty$. Using the hypotheses $\textbf{(H2)}$, $\textbf{(H3)}$ and the fact that $\W(t,s)$ is strongly continuous, we have for all $t\in I$,
	\allowdisplaybreaks
	\begin{align*}
	\lVert\psi(\zeta_n)(t)-\psi(\zeta)(t)\rVert&=\bigg\lVert-\W(t,0)(g(\zeta_n-g(\zeta))+\int_0^t\,\W(t,s)\mathcal{V}(u_{\zeta_n}(s)-u_{\zeta}(s))\,\mathrm{d}s\\
	&+\int_0^t\,\W(t,s)\big[\delta(s,\zeta_n(s))-\delta(s,\zeta(s)) \big]\mathrm{d}h(s)\bigg\rVert\\
	&\leq L_1\lVert g(\zeta_n)-g(\zeta)\rVert+L_1L_2\int_0^t\,\lVert u_{\zeta_n}(s)-u_\zeta(s)\rVert\,\mathrm{d}s\\
	&+L_1\int_0^t\lVert\delta(s,\zeta_n(s))-\delta(s,\zeta(s))\rVert \mathrm{d}h(s)\\
	&\leq L_1\lVert g(\zeta_n)-g(\zeta)\rVert+L_1L_2 a^{\frac{1}{2}}\lVert u_{\zeta_n}-u_\zeta\rVert_{L^2}\\
	& + L_1\int_0^t\lVert\delta(s,\zeta_n(s))-\delta(s,\zeta(s))\rVert \mathrm{d}h(s).
	\end{align*}
	Note that 
	\begin{align*}
	\lVert u_{\zeta_n}-u_\zeta\rVert_{L^2}&\leq L_3\bigg[\lVert g(\zeta_n)-g(\zeta)\rVert+L_1\lVert g(\zeta_n)-g(\zeta)\rVert\\
	&+L_1\int_0^a\,\lVert\delta(s,\zeta_n(s))-\delta(s,\zeta(s))\rVert\,\mathrm{d}h(s)\bigg].
	\end{align*}
	Then by hypotheses $\textbf{(H2)}$, $\textbf{(H3)}$ and the dominated convergence theorem for Lebesgue-Stieltjes integrals, we obtain that
	$$
	\lVert\psi(\zeta_n)(t)-\psi(\zeta)(t)\rVert\rightarrow 0\;\;\mbox{as}\;\;n\rightarrow\infty.
	$$
	Moreover, by \textbf{step 3} we know that $\{\psi(\zeta_n)\}_{n=1}^\infty$ is equiregulated . This result combined with the fact that $\psi(\zeta_n)\rightarrow\psi(\zeta)$ as $n\rightarrow\infty$ and Lemma \ref{lemm23} gives $\psi(\zeta_n)$ converges uniformly to $\psi(\zeta)$ as $n\rightarrow\infty$. Hence, $\psi$ is a continuous operator on $\overline{B_r}$.\\
	
	\textbf{Step 5 :} The M\"onch's condition holds.\\
	Suppose $D\subset \overline{B_r}$ is countable and $D\subset \overline{co}(\{0\}\cup\psi(D))$, we will show that $\lambda(D)=0$.\\
	Without loss of generality, we may suppose that $D=\{\zeta_n\}_{n=1}^\infty$. \textbf{Step 3} implies that $\psi(D)$ is equiregulated on $I$, then by using the fact that $D\subset \overline{co}(\{0\}\cup\psi(D))$ and Lemma \ref{lemme 2.10}, we get that $D$ is also equiregulated on $I$.\\
	By Lemma \ref{Lemme 2.13}, hypotheses $\textbf{(H2)}$-(iii) and $\textbf{(H4)}$-(ii), we have
	\begin{align*}
	\lambda(\{u_{\zeta_n}(s)\}_{n=1}^\infty)&\leq P_Z(s)\lambda\Bigg(\bigg[ \int_0^a \;\W(a,s)\delta(s,\zeta_n(s))\,\mathrm{d}h(s)\bigg]_{n=1}^\infty\Bigg)\\
	&\leq 2P_Z(s)L_1\int_0^a L(s)\lambda(D(s))\mathrm{d}h(s).
	\end{align*}
	Therefore
	\allowdisplaybreaks
	\begin{align*}
	\lambda((\psi D)(t)) &= \lambda(\{(\psi\zeta_n)(t)\}_{n=1}^\infty)\\
	&\leq \lambda(\{\W(t,0)(\zeta_0-g(\zeta_n)\}_{n=1}^\infty)+\lambda\bigg(\bigg[\int_0^t\;\W(t,s)\delta(s,\zeta_n(s))\,\mathrm{d}h(s)\bigg]_{n=1}^\infty\bigg)\\
	&+\lambda\bigg(\bigg[\int_0^t\;\W(t,s)\mathcal{V}u_{\zeta_n}(s)\,\mathrm{d}s\bigg]_{n=1}^\infty\bigg)\\
	&\le 2L_1\int_0^a\,L(s)\lambda(D(s))\,\mathrm{d}h(s)+2L_1L_2\int_0^a\lambda(\{u_{\zeta_n}(s)\}_{n=1}^\infty)\,\mathrm{d}s\\
	&\leq\bigg(2L_1+4L_1^2L_2\int_0^a\,P_Z(s)\,\mathrm{d}s\bigg)\int_0^a\,L(s)\lambda(D(s))\mathrm{d}h(s)\\
	&\leq\bigg(2L_1+4L_1^2L_2\int_0^a\,P_Z(s)\,\mathrm{d}s\bigg)\int_0^a\,L(s)\,\mathrm{d}h(s)\lambda(D)=\widetilde{L}\lambda(D).
	\end{align*}
	Using the previous result and the M\"onch condition, we obtain that
	$$
	\lambda(D)\leq\lambda(\overline{co}\{0\}\cup\psi(D)))=\lambda(\psi(D))\leq \widetilde{L}\lambda(D).
	$$
	Since $\widetilde{L}<1$, then $\lambda(D)=0$. Therefore, the set $D$ is relatively compact in $\mathcal{R}(I,\mathbb{X})$.\\
	
	According to Theorem \ref{Monch}, the operator $\psi$ has a fixed point in $\overline{B_r}$. Hence, the nonlocal measure-driven system (\ref{sys}) is  controllable on the interval $I$. 	
\end{proof}

\section{Example}\label{sect4}
Consider the integrodifferential systems driven by a measure of the form
\begin{equation}\label{exple}
\left\{\begin{array}{l c l}
\displaystyle d_tN(t,x)=\tau(t)\frac{\partial^2}{\partial  x^2}N(t,x) 
+ \int_0^tG(t,s)\frac{\partial^2}{\partial  x^2}N(s,x)\mathrm{d}s
+\theta\, \nu(t,x) \\ 
\hspace{3cm}+ M_0\cos(N(t,x))\,\mathrm{d}h(t),  \;\; t\in I=[0, 1],\;\;x\in[0,\pi],\\ 
N(t,0)=N(t,1)=0,\;\;t\in[0,1], \\ 
N(0,x)+\displaystyle\int_0^1\int_0^1\,f(t,x)\log(1+|N(s,p)|^{\frac{1}{2}})\,\mathrm{d}t\mathrm{d}p=N_0(x),\;\;x\in[0,1], 
\end{array}\right.
\end{equation}
where $\theta, M_0>0$ are constants, and $\tau: \mathbb{R}^+\to \mathbb{R}$ is a H\"older continuous function with order $0<\alpha\le 1$, meaning that there exists a positive constant $K_\tau$ such that 
$$
|\tau(t) - \tau(s)|\le K_\tau|t-s|^\alpha\,  \text{ for } t,s\in\mathbb{R}^+.
$$
Moreover, $\tau: \mathbb{R}^+\to \mathbb{R}$ is continuously differentiable and $\tau(t)<-1$.\\ 
The function $G(\cdot, \cdot)\in BU(\mathbb{R}^+\times\mathbb{R}^+,\mathbb{R})$, where $BU(\mathbb{R}^+\times\mathbb{R}^+, \mathbb{R})$ is the space of all bounded uniformly continuous functions from $\mathbb{R}^+\times \mathbb{R}$ into $\mathbb{R}$. In addition, $t\to\frac{\partial}{\partial t}G(t,s)$ is bounded and continuous from $\mathbb R^+$ to $\mathbb R$.\\ We introduce the space $\mathbb{X}=L^2([0,\pi], \mathbb{R})$ so that the system \eqref{exple} can be written in the abstract form \eqref{sys} and define the family of operators $\mathcal{A}(t)$ as follows: 
\begin{equation*}
    \begin{cases}
        (\mathcal{A}z)(x) = \tau(t)z^{\prime\prime}(x), t\ge 0, x\in [0,\pi],\\
        z\in \mathsf{D}(\mathcal{A}(t))= \mathsf{D}(\mathcal{A})=H^2(0,\pi)\cap H_0^1(0,\pi).
    \end{cases}
\end{equation*}
The operator $\mathcal{A}$ is defined as: 
\begin{equation*}
    \begin{cases}
        (\mathcal{A}z)(x) = z^{\prime\prime}(x), x\in [0,\pi],\\
        z\in \mathsf{D}(\mathcal{A}(t))= \mathsf{D}(\mathcal{A})=H^2(0,\pi)\cap H_0^1(0,\pi),\,  t\ge 0.
    \end{cases}
\end{equation*}
and 
\begin{equation*}
    \begin{cases}
        (\mathcal{B}(t,s)z)(x) = G(t,s)z^{\prime\prime}(x),\, 0\le s\le t, x\in [0,\pi],\\
        z\in \mathsf{D}(\mathcal{A}(t))= \mathsf{D}(\mathcal{A})=H^2(0,\pi)\cap H_0^1(0,\pi).
    \end{cases}
\end{equation*}


In what follows we will show that the following assumptions are verified. \\
\begin{itemize}
    \item[---] \textbf{Assumptions (C1)-(C3):}
\begin{itemize}
    \item Assumptions (C1): for each $t\ge 0, \mathcal{A}(t)=\tau(t)\Delta$ generates a strongly continuous semigroup on $\mathbb{X}$. In addition, $\mathcal{A}(t)z=\tau(t)\Delta z$ is strongly continuously differentiable on $\mathbb{R}^+$ for each $z\in\mathsf{D}(\mathcal{A})$, due to the strong continuous differentiability of  $\tau(\cdot)$. We known that $\Delta$ is an infinitesimal generator of a $C_0$-semigroup of contraction on $\mathcal{X}$. Since $b(t) <-1$, it follows that $\tau(t)\Delta$ is an infinitesimal generator of a $C_0$-semigroup of contraction on $\mathbb{X}$. Therefore, for each $t\ge 0, \mathcal{A}(t)=\tau(t)\Delta$ is stable (see, \cite{pazy2012semigroups}). Furthermore, we have $(\mathcal{B}(t)z)(\cdot)=G(t +\cdot, t)\mathcal{A}z$ for each $z\in\mathsf{D}(\mathcal{A})$. Since $\frac{\partial G(t,s)}{\partial t}$ is continuously differentiable from $\mathbb{R}^+$ to $\mathbb{R}$, then $\mathcal{B}(t)z$ is strongly continuously differentiable on $\mathbb{R}^+$ for each $z\in\Y$.  Hence, assumption \textbf{(C1)} is satisfied. 
    
   Thus for $z\in\mathsf{D}(\mathcal{A})=\mathsf{D}$, 
$$ \mathcal{A}(t)z = \sum_{n=1}^{+\infty}(-n^2\tau(t))\langle z, e_n\rangle e_n$$
where $\langle\cdot, \cdot\rangle$ is the usual inner product on $\mathbb{X}$.
Therefore, $\mathcal{A}(t)$ generate an evolution family $\{S(t,s): 0\le t\le 1\}$ given by:
\begin{equation}\label{semie}
    S(t,s)z=\sum_{n=1}^{+\infty} e^{((-n^2)(t-s)\times \int_s^t\tau(u)du)}\langle z, e_n\rangle e_n, \text{ for } 0\le s\le t\le T, z\in \mathbb{X}.
\end{equation}
we show that the evolution family $\{S(t,s): 0\le s\le t\}$ is norm-continuous for $t-s>0$.\\
Let $0<s_1<s_2<t_1<t_2$,
\allowdisplaybreaks
\begin{align*}
\lVert S(t_2,s_2)z-S(t_1,s_1)z\rVert &=\bigg\lVert\sum_{n=1}^{+\infty} e^{((-n^2)(t_2-s_2)\times \int_{s_2}^{t_2}\tau(u)du)}\langle z, e_n\rangle e_n\\ &-\sum_{n=1}^{+\infty} e^{((-n^2)(t_1-s_1)\times \int_{s_1}^{t_1}\tau(u)du)}\langle z, e_n\rangle e_n\bigg\rVert\\
&=\bigg\lVert\sum_{n=1}^{+\infty} e^{((-n^2)(t_2-s_2)\times (\int_{s_2}^{t_1}\tau(u)du+ \int_{t_1}^{t_2}\tau(u)du))}\langle z, e_n\rangle e_n\\
&-\sum_{n=1}^{+\infty} e^{((-n^2)(t_1-s_1) \times(\int_{s_1}^{s_2}\tau(u)du+ \int_{s_2}^{t_1}\tau(u)du))}\langle z, e_n\rangle e_n\bigg\rVert
\end{align*}
Since the series $\sum_{n=1}^{+\infty} e^{((-n^2)(t-s)\times\int_s^t\tau(u)du)}\langle z, e_n\rangle e_n, \text{ for } 0\le s\le t\le 1, z\in \mathbb{X}$ is convergent then if $t_2\rightarrow t_1$ and $s_2\rightarrow s_1$, we obtain
$\lVert S(t_2,s_2)z-S(t_1,s_1)z\rVert \rightarrow 0$. Therefore, $\{S(t,s): 0\le s\le t\le 1\}$ is norm-continuous for $t-s>0$

    \item Assumptions (C2): Since $G(\cdot,\cdot)$ is continuous, then $(\mathcal{B}(t))(s)=G(t+s, t)$ is continuous on $\mathbb{R}^+$. Moreover, for any $t\geq 0$ and any $y\in \Y$, we have:
\begin{align*} \|\mathcal{B}(t)y\|_\mathbb{X} &= \sup_{s\in \mathbb R^+}\|(\mathcal{B}(t) y)(s)\|_\mathbb{X}\\
&= \sup_{s\in \mathbb R^+}|(\mathcal{B}(t+s,t)|\|\mathcal{A}x\|_\mathbb{X}\\
&\le \sup_{s\in \mathbb R^+}|(\mathcal{B}(t+s,t)|(\|y\| + \|\tilde{A}y\|).
\end{align*}
Thus, assumption \textbf{(C2)} holds.
\item  Assumptions (C3): Let $y\in\Y$, then $\frac{d}{ds}(\mathcal{B}(t)y)(s)=\frac{\partial}{\partial s}G(t+s, t)\mathcal{A}y$, which implies that $\mathcal{B}(t)y\in D_s(\mathsf{D})$ and $$
\bigg(\mathsf{D}\mathcal{B}(t)y\bigg)(s) = \frac{\partial}{\partial s}G(t+s,t)\mathcal{A}y.
$$ Moreover, $t\to \frac{\partial}{\partial s}\mathcal{B}(t)$ is continuous on $\mathbb{R}^+$ and 
\begin{align*}
\|\mathsf{D}\mathcal{B}(t)y\|_\mathbb{X}\leq \sup_{s\in\mathbb{R}^+}|\frac{\partial}{\partial s}G(t+s, t)(\|y\| + \|\mathcal{A}y\|).\end{align*}
Hence, assumption \textbf{(C3)} holds, as $\frac{\partial}{\partial s}G(t+s, t)\mathcal{A}y\in\mathcal{L}(\Y,\mathfrak{F})$.
\end{itemize}

\item[---] Assumption \textbf{(E1)-(E3)}:
\begin{itemize}
    \item \textbf{(E1)} follows from \textbf{(C1)}. Assumption \textbf{(E2)} and \textbf{(E3)} hold for $M_1=1, M_2=\frac{K_\tau}{3}$, and $\theta=\alpha$. See \cite{arora2021approximate} for more details. Thus, there exists a unique evolution system $\{S(t,s)\}$ generated by the stable family $(\mathcal{A}(t), t\ge 0)$, which is given by \eqref{semie}.
Since the evolution system $\{S(t,s):0\le s\le t\le 1\}$ generated by $\mathcal{A}(t)$ is norm-continuous, then by Theorem \ref{NCR}, the corresponding resolvent operator $\W(t, s)$ of the linear part of (\ref{sys}) is norm continuous in $\mathbb{X}$ for $0<s<t<1$. Assumption \textbf{(H1)} is satisfied.
\end{itemize}
\end{itemize}
To complete the abstract formulation of \eqref{exple}, we introduce the following notations:
\begin{displaymath}
\left\{
\begin{array}{ll}
\zeta(t)(x)= N(t, x),\quad \zeta^\prime(t)(x)=d_t N(t,x)  \quad\mbox{for}\;\;  t\in[0,1]\,\mbox{ and }\, x\in [0,1],\\
\zeta(0)(x)=N(0, x),\quad\mbox{ for }\, x\in [0,1].
\end{array}
\right.
\end{displaymath}
Furthermore we define the maps $\delta:I\times\mathbb{X}\to\mathbb{X}$, $g:\mathcal{R}(I, \mathbb{X})\to \mathbb{X}$, $\mathcal{V}:\mathbb{K}\to\mathbb{X}$ by:
\begin{align*}
\delta(t,\zeta)(x)&=M_0\cos(\zeta(x)),\;\mbox{for}\;t\in[0,1],\;x\in[0,1],\;\zeta\in\mathbb{X},\\
g(\zeta)(x)&=\displaystyle\int_0^1\int_0^1\,f(t,x)\log(1+|\zeta(t)(p)|^{\frac{1}{2}})\,\mathrm{d}t\mathrm{d}p,\\
\mathcal{V}u(t)(x)&=u(t)(x)=\theta\, \nu(t,x),\; x\in [0,1].
\end{align*}
Using these definitions we can represent the  system \eqref{exple}  in the following abstract form 
\begin{equation}\label{sysexple}
\left\{\begin{array}{l c l}
\mathrm{d}\zeta(t)= \left[A(t)\zeta(t) + \displaystyle\int_{0}^{t}\Delta(t,s)\zeta(s)\mathrm{d}s + \mathcal{V}u(t)\right]\mathrm{d}t  + \delta(t,\zeta(t))\mathrm{d}h(t),\ t\in I=[0,a], a>0,\\ 
\zeta(0) + g(\zeta)=\zeta_0.&\\
\end{array}\right.
\end{equation}
Suppose that $h:[0, 1]\rightarrow\mathbb{R}$ is defined by
\begin{equation*}
h(t)=\left\{\begin{array}{l c l}
1-\frac{1}{2},\qquad 0\leq t\leq 1-\frac{1}{2},  \\ 
\cdots  \\ 
1-\frac{1}{k},\qquad 1-\frac{1}{k-1}<t\leq 1-\frac{1}{k},\;\;\mbox{ for }\; k>2\;\;\mbox{ and }\;\;k\in\mathbb{N}, \\ 
\cdots  \\ 
1,\qquad t=1.
\end{array} \right.
\end{equation*}
It is clear that $h:[0,1]\rightarrow\mathbb{R}$ is a left continuous (at $t=1$) and nondecreasing function on $[0,1]$.\\
We assume that 
\begin{enumerate}
	\item[(a)]$G:[0,1]\times \mathbb{X}\rightarrow\mathbb{X}$  is a continuous function defined by
	$$
	G(t,\zeta)(x)=\delta(t,\zeta(t,x)),\;\;\; t\in [0,1],\;\;x\in[0,1].
	$$
	Take  $\delta(t,\zeta(t,x))=M_0\cos(\zeta(x))$,  where  $M_0$ is a positive constant. Then  $G$ is Lipschitz continuous with respect to the second variable because
	\begin{align*}
	\lVert G(t,\zeta_2)-G(t,\zeta_1)\rVert&=\bigg(\int_0^1\,M_0^2|\cos(\zeta_2(x))-\cos(\zeta_1(x))|^2\,\mathrm{d}x\bigg)^{\frac{1}{2}}\\
	&\leq M_0\bigg(\int_0^1\,|\zeta_2(x)-\zeta_1(x)|^2\,\mathrm{d}x\bigg)^{\frac{1}{2}}=M_0\lVert \zeta_2 - \zeta_1\rVert.
	\end{align*}
	Moreover, since 
	$$
	\lVert G(t,\zeta)\rVert=\bigg(\int_0^1\,M_0^2\cos^2(\zeta(x))\,dx\bigg)^{\frac{1}{2}}\leq \bigg(\int_0^1\,M_0^2\,\zeta^2(x)\,dx\bigg)^{\frac{1}{2}}=M_0\lVert \zeta\rVert,
	$$
	then we can  take $n(t)=M_0$, $\omega(t)=t$, and hence $\gamma=1$ in the hypothesis $\textbf{(H2)}$-(ii). Moreover, since $\lambda(\delta(t,D))\leq M_0\lambda(D)$ for any bounded subset $D$ of $\mathbb{X}$ then $U(t)=M_0$. Hence, $G$ verifies hypothesis $\textbf{(H2)}$ in Theorem \ref{main}.
	\item[(b)] $g:\mathcal{R}([0,1],\mathbb{X})\rightarrow\mathbb{X}$ is a continuous defined by 
	$$
	g(\zeta)(x)=\displaystyle\int_0^1\int_0^1\,f(t,x)\log(1+|\zeta(t)(p)|^{\frac{1}{2}})\,\mathrm{d}t\mathrm{d}p,\qquad \zeta\in\mathcal{R}([0,1];\mathbb{X}).
	$$
	We can use the same proof as in the literature \cite{cao2016measures} to show that $g$ is a continuous and compact operator and we have
	\begin{align*}
	\lVert g(\zeta)\rVert _\mathbb{X}&=\bigg\lVert \displaystyle\int_0^1\int_0^1\,f(t,x)\log(1+|\zeta(t)(p)|^{\frac{1}{2}})\,\mathrm{d}t\mathrm{d}p\bigg\rVert_\mathbb{X}\\
	&\le \max_{t\in[0,1],x\in[0,1]}\mid f(t,x)\mid[ \Vert\zeta\Vert_{\infty}+1].
	\end{align*}
	Therefore the hypothesis \textbf{(H3)} in Theorem \ref{main} is satisfied with \\
	$c=\max_{t\in[0,1],x\in[0,1]}|f(t,x)|$ and $d=1$.
	\item[(c)] Let the control operator $\mathcal{V}:\mathbb{X}\rightarrow \mathbb{X}$ be defined by
	$$
	(\mathcal{V}u)(t)(x)=\theta\,\nu(t,x),\qquad x\in [0,1].
	$$
	It is easy to verify that $\lVert\mathcal{V}\rVert=\theta$. 
	The linear operator $Z:L^2([0,1],\mathbb{X})\rightarrow \mathbb{X}$ is given by
	$$
	Zu=\int_0^1\,\W(1-s)\theta\,u(s,x)\,\mathrm{d}s,
	$$
	It is easy to verify that $\lVert Z\rVert\le\theta$. By Appendix of \cite{Bala} and the inverse operator theorem, $Z$ has a bounded inverse operator $Z^{-1}$ taken value in $L^2([0,1];\mathbb{X})/KerZ$.\\
	According to the above analysis, let $L_2=\theta$, $c=\max_{t\in[0,1],x\in[0,1]}\mid f(t,x)\mid$, then the inequalities (\ref{cond1}) and (\ref{cond2}) take the form
	\begin{align}\label{eq5}
	\big[ L_1+L_1\theta L_3(1+L_1)\big]\lVert f\rVert_{L^2}+\frac{1}{2}M_0L_1(1+L_1\theta L_3)< 1.
	\end{align}
	and 
	\begin{align}\label{eq6}
	M_0\bigg(L_1+2L_1^2\theta\int_0^1\,P_Z(s)\,\mathrm{d}s\bigg)<1
	\end{align}
	respectively.\\
	Further, suppose the condition $\textbf{(H4)}$-(ii) holds and the above inequalities \eqref{eq5}, \eqref{eq6}  are satisfied, then the integrodifferential equation (\ref{exple}) is exactly nonlocally controllable on $[0,1]$ by Theorem \ref{main}.
\end{enumerate}
\section{Conclusion}
This work investigates a category of nonlocal measure-driven integral differential systems in Banach space. Our results, proven using the M\"onch fixed point theorem, which does not impose the compactness of the resolvent operator, represent a novel contribution to this field. Our next step will be to apply this finding to a nonlocal evolution equation with state-dependent delay, non-instantaneous impulsive, and random effects.


\section*{\textbf{Acknowledgments}}
We are grateful to anonymous referees and the editor for their constructive suggestions, which improve the quality of this manuscript. 

\end{document}